\newcommand{\arxiv}[1]{\href{http://arxiv.org/abs/#1}{\texttt{arXiv:#1}}}
\newtheorem{theorem}{Theorem}[section]
\newtheorem{lemma}[theorem]{Lemma}
\newtheorem{proposition}[theorem]{Proposition}
\newtheorem{observation}[theorem]{Observation}
\theoremstyle{definition}
\newtheorem{definition}[theorem]{Definition}
\theoremstyle{remark}
\def\marker{\>\hbox{${\vcenter{\vbox{
    \hrule height 0.4pt\hbox{\vrule width 0.4pt height 6pt
    \kern6pt\vrule width 0.4pt}\hrule height 0.4pt}}}$}\>}
\newcommand{\caze}[2]{\textbf{Case {#1}:} \textit{#2}}
\newcommand{\sizeof}[1]{\left\lvert{#1}\right\rvert}
\newcommand{\floor}[1]{\left\lfloor{#1}\right\rfloor}
\newcommand{\ceil}[1]{\left\lceil{#1}\right\rceil}
\newcommand{\chisc}{\chi_{\mathrm{SC}}}
\newcommand{\chisp}{\chi_{\mathrm{SP}}}
\def\C#1{\left|{#1}\right|}
\newcommand{\isc}{\chi_{\mathrm{ISC}}}
\def\spo{\mathring{{\rm s}}}
\def\st{\colon\,}
\def\esub{\subseteq}
\def\nul{\varnothing}
\def\FR{\frac}
\def\FL{\floor}
\def\CL{\ceil}
\def\cost{sum-color cost}
\def\NN{{\mathbb N}}
\def\CH{\binom}
\def\VEC#1#2#3{#1_{#2},\ldots,#1_{#3}}
\def\SE#1#2#3{\sum_{#1=#2}^{#3}}
\def\cR{{\mathcal R}}
\def\cS{{\mathcal S}}
\tikzstyle{vertex}=[inner sep = 0pt, minimum width=6.5pt, fill=black, shape=circle]
\tikzstyle{forest edge}=[line width=2.5pt]
\newcommand{\gpoint}[2]{\node[style=vertex, label=#1:$#2$]}
\newcommand{\apoint}[1]{\gpoint{above}{#1}}
\newcommand{\rpoint}[1]{\gpoint{right}{#1}}
\title{Online Sum-Paintability: Slow-Coloring of Trees}
\author{Gregory J.~Puleo and Douglas B.~West}
\begin{document}
\maketitle

\vspace{-2pc}

\begin{abstract}
The {\it slow-coloring game} is played by Lister and Painter on a
graph $G$.  On each round, Lister marks a nonempty subset $M$ of the
remaining vertices, scoring $\C M$ points.  Painter then gives a
color to a subset of $M$ that is independent in $G$.  The game ends
when all vertices are colored.  Painter's goal is to minimize the
total score; Lister seeks to maximize it.  The score that each
player can guarantee doing no worse than is the \emph{\cost} of $G$,
written $\spo(G)$.  We develop a linear-time algorithm to compute
$\spo(G)$ when $G$ is a tree, enabling us to characterize the $n$-vertex
trees with the largest and smallest values.  Our algorithm also computes on
trees the \emph{interactive sum choice number}, a parameter
recently introduced by Bonamy and Meeks.
\end{abstract}

\bigskip\noindent \textbf{Keywords:} slow-coloring game; tree; stem vertex;
interactive sum choice number

\section{Introduction}\label{sec:intro}
The {\it slow-coloring game} (introduced in~\cite{MPW}) models the difficulty
of producing a proper coloring of a graph $G$ when it is not known in advance
which vertices are allowed to have which colors.  The players are {\it Lister}
and {\it Painter}.  On the $i$th round, Lister marks a nonempty subset $M$ of
the uncolored vertices, scoring $\C M$ points.  Painter gives color $i$ to a
subset of $M$ that is independent in $G$.  The game ends when all vertices are
colored.  Painter wants to minimize the total score; Lister wants to maximize
it.  The score that each player can guarantee achieving is the \emph{\cost} of
$G$, written $\spo(G)$.

This game is an online version of the ``painting game'', which is an online
version of list coloring.  List coloring generalizes classical graph coloring
by introducing a \textit{list assignment} $L$ that assigns to each vertex $v$ a
set $L(v)$ of available colors.  A graph $G$ is \emph{$L$-colorable} if it has
a proper coloring $\phi$ with $\phi(v)\in L(v)$ for every vertex $v$.  Given
$f\st V(G)\to\NN$, a graph $G$ is \emph{$f$-choosable} if $G$ is $L$-colorable
whenever $\C{L(v)}\ge f(v)$ for all $v$.

Introduced by Vizing~\cite{V2} and by Erd\H{o}s, Rubin, and
Taylor~\cite{ERT}, the \textit{choosability} of a graph $G$ is the
least $k$ such that $G$ is $f$-choosable whenever $f(v)\ge k$ for all
$v\in V(G)$.  Alternatively, we may minimize the sum (or average) of
list sizes.  Introduced by Isaak~\cite{I1,I2} and studied also in
\cite{BBBD,H1,H2,MTW}, the \textit{sum-choosability} of a graph $G$,
denoted $\chisc(G)$, is the minimum of $\sum f(v)$ when $G$ is
$f$-choosable.

In the {\it $f$-painting game}, the color lists are not known in advance.
In round $i$, Lister marks a set $M$ of vertices allowed to receive color $i$;
this can represent the set of vertices having color $i$ in their lists.
Painter chooses an independent subset of $M$ to receive color $i$.  Lister can
design later marked sets based on Painter's choices.  Lister wins if some
vertex is marked more than $f(v)$ times; Painter wins by first coloring all the
vertices.  The graph is \textit{$f$-paintable} if Painter has a winning
strategy.  Introduced by Schauz~\cite{S1} and by Zhu~\cite{Z1},
the \textit{paintability} is the least $k$ such that $G$ is $f$-paintable
whenever $f(v)\ge k$ for all $v\in V(G)$.  Introduced by Carraher et
al.~\cite{CMPW} and studied also in~\cite{MTW}, the \emph{sum-paintability} of
a graph $G$, denoted $\chisp(G)$, is the least value of $\sum f(v)$ for a
function $f$ such that $G$ is $f$-paintable.  Since Lister marks sets in
response to Painter's choices, $\chisp(G)\ge\chisc(G)$ for all $G$.

We view $f$ as allocating tokens to vertices; marking a vertex uses up a token.
Compared to sum-paintability, the slow-coloring game gives some help to Painter
by allowing Painter to postpone allocating tokens to vertices until they are
needed.  The sum-color cost $\spo(G)$ equals the minimum number of tokens
Painter must have available to guarantee producing a coloring.  Since Painter
can always play as if the available tokens are given by a function $f$ such
that $G$ is $f$-paintable, $\spo(G) \leq \chisp(G)$.

Mahoney, Puleo, and West~\cite{MPW} proved various results on $\spo(G)$.
With $\alpha(G)$ denoting the independence number, always
$\FR{\C{V(G)}}{2\alpha(G)}+\FR 12\le\FR{\spo(G)}{\C{V(G)}}\leq 
\max\left\{\frac{\C{V(H)}}{\alpha(H)}\st\!H \subset G\right\}$.  Equality
holds in $\spo(G)\le\chisp(G)$ if and only if all components are complete.
For complete bipartite graphs,
$r+\FR52 s-3+\sqrt{2r-2s}< \spo(K_{r,s})\le r+s+2\sqrt{rs}$ when $r\ge s$,
with $\spo(K_{r,r})\sim 4r$ conjectured.

In this paper, we extend their results on trees.  For $k,r\in\NN$, let
$t_k=\CH{k+1}2$ and $u_r=\max\{k\st t_k \le r\}$.
The numbers $t_k$ are the {\it triangular numbers}.
Note that $u_r = \floor{\frac{-1 + \sqrt{1 + 8r}}{2}}$.

\begin{theorem}[\rm\cite{MPW}]\label{treethm}
For every $n$-vertex tree $T$,
{\rm
\[
n+\sqrt{2n}~\approx~ n+u_{n-1}~ = ~ \spo(K_{1,n-1})
~\le~\spo(T)~\leq~\spo(P_n) ~=~ \floor{{3n}/2}.
\]
}
\end{theorem}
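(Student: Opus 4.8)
The plan is to prove the two outer equalities by exhibiting explicit strategies and then to establish the two inequalities as \emph{universal} bounds over all $n$-vertex trees, so that $P_n$ and $K_{1,n-1}$ emerge as the extremizers. Concretely I would reduce the chain to four statements: $\spo(P_n)=\floor{3n/2}$; $\spo(K_{1,n-1})=n+u_{n-1}$; the universal upper bound $\spo(T)\le\floor{3n/2}$ (making $P_n$ a maximizer); and the universal lower bound $\spo(T)\ge n+u_{n-1}$ (making $K_{1,n-1}$ a minimizer). The quoted ratio bounds are too weak at the endpoints, giving only $\spo(P_n)\le 2n$ and $\spo(K_{1,n-1})\ge n+O(1)$, so both corners need dedicated arguments.

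For the universal upper bound I would run a Painter potential argument on the remaining forest $H$. Give each uncolored vertex potential $\phi(v)=1$ if $v$ is isolated in $H$ and $\phi(v)=\tfrac32$ otherwise, so the initial potential of a tree is $\tfrac32 n$. Painter's goal each round is to answer a marked set $M$ with an independent $I\subseteq M$ for which $\sum_{v\in I}\phi(v)+\tfrac12 j\ge\C M$, where $j$ counts the vertices that removing $I$ newly isolates; this is exactly the condition that the round's score plus the change in potential is nonpositive, so telescoping yields $\spo(T)\le\tfrac32 n$, and since the score is an integer this gives $\spo(T)\le\floor{3n/2}$ (the parity is handled automatically). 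The matching bound for $P_n$ is the Lister strategy of marking every remaining vertex: the first round scores $n$, after which at least $\floor{n/2}$ vertices remain and cost at least $\floor{n/2}$ more, for a total of at least $\floor{3n/2}$. \emph{The main obstacle of this part is the forest lemma}: that in any forest, for any nonempty $M$, such an $I$ exists. I would try to prove it by choosing $I$ greedily from the leaves of the induced subforest $H[M]$ (coloring a leaf of $H[M]$ and banking the isolation of its neighbor), checking that the tight cases are exactly disjoint edges and stars, both of which meet the inequality with equality.

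For the star value I would analyze the game directly. While the center is uncolored Lister should always mark it, since marking only leaves lets Painter color them all at once; thus the sensible states are parametrized by the number $L$ of remaining leaves with the center still present. Writing $F(L)$ for the game value from such a state, marking the center together with $k$ leaves gives the recursion
\[
F(L)=\max_{1\le k\le L}\Bigl[(k+1)+\min\bigl(L,\,F(L-k)\bigr)\Bigr],\qquad F(0)=1,
\]
since Painter either colors the center (after which the $L$ leaves cost $L$) or colors $k$ leaves (reaching $F(L-k)$). I would then verify by induction that $F(L)=L+1+u_L$: the choice $k=u_L$ makes Painter indifferent, because $L-u_L\ge t_{u_L-1}$ forces $u_{L-u_L}\ge u_L-1$, and no larger batch does better. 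With $L=n-1$ this gives $\spo(K_{1,n-1})=n+u_{n-1}$, the $u_{n-1}$ term being the number of rounds (about $\sqrt{2n}$) for which Lister keeps the center alive while peeling off triangularly shrinking batches of leaves.

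The remaining and hardest step is the universal lower bound $\spo(T)\ge n+u_{n-1}$, i.e.\ that the star minimizes $\spo$ among $n$-vertex trees. Marking everything is too weak here (it forces only $n+1$ on the star), so the tight strategy is the triangular batching above, which is tailored to the star. I would approach star-minimality by an extremal/exchange argument: take a tree $T$ minimizing $\spo$ and, if $T$ is not a star, relocate a leaf toward a higher-degree vertex, arguing that this does not increase $\spo$ and concluding that the minimizer is $K_{1,n-1}$. \emph{The delicate point is the monotonicity of $\spo$ under such leaf-relocation}, which does not follow from subgraph monotonicity, since $K_{1,n-1}$ is not a subgraph of most trees. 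I expect to need the recursive decomposition of $\spo$ on trees (the structure underlying the linear-time algorithm, organized around stem vertices) to show that compressing a tree toward a star never costs Painter more. This monotonicity lemma is where I anticipate the real work.
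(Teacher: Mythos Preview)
Your forest lemma is false, and the potential argument collapses with it. Take $H=P_6=v_1v_2v_3v_4v_5v_6$ and $M=\{v_3,v_4\}$. Every vertex has $\phi=\tfrac32$. The only nonempty independent subsets of $M$ are $\{v_3\}$ and $\{v_4\}$; removing either leaves $P_2\cup P_3$, in which no vertex is isolated, so $j=0$ and $\sum_{v\in I}\phi(v)+\tfrac12 j=\tfrac32<2=\C M$. Thus there is no $I$ meeting your inequality, and the round-by-round telescoping fails. More generally, whenever $M$ is a single edge neither of whose endpoints has a leaf neighbor in $H$, the drop in your potential is at most $\tfrac32$ against a score of $2$. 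Any per-vertex potential that is at most $\tfrac32$ on every vertex will hit this wall, so the fix is not a small tweak of $\phi$.

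The paper does not use a potential. It first proves the Main Theorem (the stem recursion) and then derives both extremal bounds from it: for the upper bound, one checks $r+1+u_r\le\lfloor 3(r+1)/2\rfloor$ and $r+u_r\le\lfloor 3r/2\rfloor$ in the two cases of the recursion and inducts; for the lower bound, one combines the recursion with the arithmetic lemma $u_r+u_{m-r}\ge u_1+u_{m-1}$ for $r\ne2$ and $m=n-2$. Your star analysis is essentially the paper's Theorem~\ref{thm:split}, and your Lister-marks-everything argument for $\spo(P_n)\ge\lfloor 3n/2\rfloor$ is fine. But for the universal lower bound, the leaf-relocation monotonicity you propose is at least as hard as what it is meant to prove; the paper bypasses it entirely by reducing directly to the numerical inequality via the stem recursion, so that is the ``recursive decomposition'' you should aim at rather than an exchange argument.
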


\noindent
Let a {\it stem} in a forest be a vertex having a leaf neighbor and
at most one non-leaf neighbor.  We now state our main result.

\begin{theorem}[Main Theorem]\label{thm:main}
Let $T$ be a forest.  If $T$ has no edges, then $\spo(T)=|V(T)|$.  If $v$ is a
stem in $T$ and $R$ is the set of leaf neighbors of $v$, with $r=|R|$, then
  \[ \spo(T) =
  \begin{cases}
\spo(T-R-v)+r+1+u_r,& \text{if $r+1$ is not a triangular number,}
\\
 \spo(T-R)+r+u_r, &\text{if $r+1$ is a triangular number.}
  \end{cases} \]
\end{theorem}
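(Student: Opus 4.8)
The edgeless case is immediate: with no edges every marked set is independent, so Painter colors all marked vertices each round and the total equals $|V(T)|$ no matter how the game is played. For the recursion I would induct on the number of edges; a longest‑path argument shows that every forest with an edge has a stem, so the reduction terminates at the edgeless base case, and the reduced forests $T-R-v$ and $T-R$ are governed by the induction hypothesis. The plan is to prove separately that the claimed value is an upper bound (a Painter strategy) and a lower bound (a Lister strategy) for $\spo(T)$. The engine of both halves is the star value $\spo(K_{1,r})=r+1+u_r$ from Theorem~\ref{treethm}, together with its discrete‑derivative behavior: $\spo(K_{1,r+1})-\spo(K_{1,r})$ equals $2$ exactly when $r+1$ is a triangular number and equals $1$ otherwise. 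This threshold is precisely what decides whether the center $v$ is colored inside the cluster $\set{v}\cup R$ or deferred into the reduced forest.

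For the lower bound I would have Lister attack only the cluster first, marking $v$ together with a carefully chosen number of leaves each round according to the star‑optimal schedule; this forces cost $r+u_r$ from the leaves and, through the derivative‑$2$ jump when $r+1$ is triangular, an additional point tied to $v$. Lister then plays a forcing strategy on the reduced forest, whose vertices were untouched in the first phase and so present a fresh game worth $\spo$ of that forest by induction. The case split records how the cluster phase ends: when $r+1$ is not triangular the efficient play colors $v$, the residual game is on $T-R-v$, and the two phases sum to $\spo(T-R-v)+r+1+u_r$; when $r+1$ is triangular the forced extra point lets $v$ persist as a leaf of $w$, the residual game is all of $T-R$, and the phases sum to $\spo(T-R)+r+u_r$.

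For the upper bound I would have Painter run an optimal strategy for the reduced forest in parallel with the star‑optimal local strategy: on each round Painter splits Lister's marked set into its cluster part and its reduced‑forest part, answers each by the corresponding strategy, and colors the union. The only adjacency crossing this cut is the edge $vw$, so the union fails to be independent only if both strategies try to color along $vw$ in the same round. In the triangular case $v$ lies in the reduced forest $T-R$, which already owns the adjacency $vw$, while the local strategy touches only the leaves at cost at most $r+u_r$, so no conflict arises. In the non‑triangular case the local strategy is responsible for coloring $v$, and I would use an exchange argument—deferring whichever of $v,w$ the other strategy needs by one round—to show that this never costs Painter beyond the two separate budgets, yielding $\spo(T-R-v)+r+1+u_r$.

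The main obstacle is the dichotomy itself: showing that the smaller Case‑A value is unattainable for Painter exactly when $r+1$ is triangular. Since adding the pendant $v$ at $w$ raises $\spo$ by $1$ or $2$, the Case‑A expression is always at most the Case‑B expression, so the content is to prove that Lister can force the extra point in the triangular case while Painter can avoid it otherwise. This is where the two halves must interlock cleanly: the cluster phase must extract its stated constant while leaving the reduced forest untouched, and the residual value must combine additively with neither player profiting by straddling the $vw$ cut within a single round. I expect to formalize Painter's side with a potential‑function invariant that amortizes each round's score against the drop in a potential on the partially colored forest, and Lister's side with the explicit triangular \emph{staircase} of marked sets underlying $\spo(K_{1,r})=r+1+u_r$; reconciling the off‑by‑one produced by the $vw$ edge with the triangular threshold is the delicate point.
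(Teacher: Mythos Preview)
Your overall architecture---cut at the edge $vw$, prove matching upper and lower bounds using star arithmetic---matches the paper's. But your triangular-case upper bound has a genuine gap. You write that ``$v$ lies in the reduced forest $T-R$ \ldots\ while the local strategy touches only the leaves at cost at most $r+u_r$, so no conflict arises.'' If the partition is $(V(T)\setminus R,\, R)$, the cut carries $r$ edges (one from $v$ to each leaf), not one; the $R$-side is an independent set costing exactly $r$, and parallel play yields only $\spo(T-R)+2r$ from the cut bound. If instead your ``local strategy'' is star play on $R\cup\{v\}$, then $v$ belongs to both pieces and the parallel-play framework is ill-defined. There is no local game on $R$ alone with budget $r+u_r$: the $u_r$ comes entirely from interaction with $v$. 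The paper's proof of this bound (Lemma~\ref{lem:penupper}) abandons the parallel-strategy template and instead inducts on $|V(T)|$ with a case analysis of Lister's first move $M$, splitting on whether $v\in M$ and on how $p=|M\cap R|$ compares to $u_r$; in each case Painter's response is built from an optimal response on $T-R$ or on $T-R-v$, and the induction hypothesis closes the arithmetic.

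Two smaller points. For the non-triangular upper bound, your ``deferral'' exchange needs a lemma you have not stated: when $r+1$ is not triangular, Painter has an optimal star response that colors only leaves, \emph{and} the remaining star again has $r'+1$ non-triangular (this is the last clause of Theorem~\ref{star}). That persistence is what lets Painter postpone touching $v$ all the way down to $r=1$, where the symmetry of $K_{1,1}$ resolves the $vw$ conflict directly. For the triangular lower bound, your claim that ``$v$ persists'' into the $T-R$ phase assumes Painter never colors $v$ during Lister's cluster attack, but Painter may well do so; the paper (Lemma~\ref{penlower}) has Lister open by marking $v$ together with $u_r+1$ leaves and then checks that if Painter takes $v$, the resulting score $u_r+2+r+\spo(T-R-v)$ is still at least $\spo(T-R)+r+u_r$, because $v$ is a leaf in $T-R$ and hence $\spo(T-R)\le\spo(T-R-v)+2$. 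Your ``derivative-$2$ jump'' intuition is pointing at exactly this inequality, but it must be invoked on this branch, not assumed away.
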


Since stems are easy to find (the neighbor of a leaf on a longest path is a
stem), this result gives a linear-time algorithm to compute $\spo$ on forests.
We also use it to characterize the extremal trees.

\begin{theorem}\label{charmaxa}
If $T$ is an $n$-vertex forest, then $\spo(T)=\FL{3n/2}$ (the maximum) if and
only if $T$ contains a spanning forest in which every vertex has degree $1$ or
$3$, except for one vertex of degree $0$ or $6$ when $n$ is odd.
If $n\ge4$ and neither $n-1$ nor $n-2$ is a triangular number, then
$\spo(T)=n+u_{n-1}$ (the minimum) if and only if $T$ is a star (in the 
remaining cases, a few additional trees achieve the minimum).
\end{theorem}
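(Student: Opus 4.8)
The plan is to recast Theorem~\ref{thm:main} as a single additive formula and then optimize it. Writing $\phi(T)=\spo(T)-\sizeof{V(T)}$, a one-line computation shows that \emph{both} cases of the recursion collapse to the same statement, $\phi(T)=\phi(T')+u_r$, where $T'$ is the residual ($T-R-v$ or $T-R$) and $r=\sizeof R$; the extra $r+1$ or $r$ in $\spo$ is exactly absorbed by the change in vertex count. Iterating down to the edgeless base, where $\phi=0$, yields
\[
\spo(T)=\sizeof{V(T)}+\SE{s}{1}{p} u_{r_s},
\]
the sum running over the stems removed during the recursion. Since the left side is fixed, this sum is independent of the order in which stems are chosen, and for a tree the residual stays connected at every step, so the recursion is a single chain terminating in the empty graph or one isolated vertex. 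Both extremal problems then become: optimize $\sum_s u_{r_s}$ over $n$-vertex forests.

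For the maximum I would bound each term. A short check gives $2u_r\le r+1$ in the first case and $2u_r\le r$ in the second (the latter only when $r+1$ is triangular), with equality exactly for $r\in\set{1,3}$ in the first case and $r=2$ in the second. Summing over steps and writing $z$ for the number of leftover isolated vertices gives $\phi(T)\le\tfrac12(\sizeof{V(T)}-z)\le\floor{n/2}$, which reproves $\spo(T)\le\floor{3n/2}$ for forests. Equality forces, for even $n$, that $z=0$ and every step is tight, and for odd $n$ either $z=1$ with every step tight or $z=0$ with exactly one step carrying a half-unit of slack, namely the first case with $r\in\set{4,6}$ or the second with $r=5$.

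It remains to match these tight decompositions with spanning forests of degrees $1$ and $3$. For the easy direction I would use that $\spo$ is monotone under adding edges (added edges only shrink Painter's legal sets) together with additivity over components, both immediate from the recursion: given such a spanning forest $F$, one has $\spo(T)\ge\spo(F)=\sum_C\spo(C)$, every component $C$ of $F$ (a $K_2$, a tree with all internal degrees $3$, an isolated vertex, or a component carrying the degree-$6$ vertex) is itself extremal, and a parity count---degree-$\set{1,3}$ trees have even order while a degree-$6$ component has odd order---gives $\sum_C\floor{3\sizeof{V(C)}/2}=\floor{3n/2}$. The converse, building $F$ from a tight recursion, is where I expect the real work: $r=1$ steps should emit a $K_2$, $r=3$ steps a claw, $r=2$ steps two leaf-edges feeding a stem that later reaches degree $3$, and the single half-slack step (or leftover vertex) should supply the unique degree-$0$ or degree-$6$ exception. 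Organizing the kept stems of the Case-$2$ steps and the lone defect into a globally consistent spanning forest is the main obstacle in this half.

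For the minimum the bound $\phi(T)\ge u_{n-1}$ is already given by Theorem~\ref{treethm}, so only the equality case needs attention. A tree uses a single step precisely when it is a star (a connected edgeless residual is empty or a single vertex), so any non-star has $p\ge2$ steps. Here the engine is the subadditivity $u_a+u_b\ge u_{a+b}$---equivalently the superadditive identity $\frac{(i+j)(i+j+1)}2=\frac{i(i+1)}2+\frac{j(j+1)}2+ij$ for triangular numbers---combined with the bookkeeping $\sum_s r_s=n-1-e$ with $e\ge0$. Merging two steps can reach the value $u_{n-1}$ only when a triangular threshold is hit exactly; carrying this through an induction on $n$ that peels a single stem, I expect equality with $p\ge2$ to force $n-1$ or $n-2$ to be triangular. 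The motivating computation is the once-subdivided star, for which the formula gives $\spo=n+1+u_{n-3}$, tying the star exactly when $n-2$ is triangular. Under the hypothesis that neither $n-1$ nor $n-2$ is triangular every such coincidence is excluded, leaving only the star; dropping the hypothesis yields the finitely many extra minimizers, which the same analysis lists explicitly. The delicate point throughout is controlling the interaction between the step sizes $r_s$ and the irregular gaps between triangular numbers, which is exactly what the two excluded residues rule out.
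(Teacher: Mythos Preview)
Your $\phi$-reformulation is correct and cleanly packages the upper bound; the paper proves $\spo(T)\le\FL{3n/2}$ by the equivalent check $r+1+u_r\le\FL{3(r+1)/2}$ (respectively $r+u_r\le\FL{3r/2}$ in the triangular case), so that part matches.

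The concrete gap is in the minimum. Subadditivity $u_a+u_b\ge u_{a+b}$ is the wrong inequality here: a case-1 step removes $r+1$ vertices while contributing only $u_r$, so peeling one stem and invoking induction gives $\phi(T)\ge u_r+u_{n-r-2}$, and subadditivity bounds this below only by $u_{n-2}$---not enough to beat the star. What is actually needed (and what the paper isolates as Lemma~\ref{concave}) is the rearrangement-type inequality $u_r+u_{m-r}\ge u_1+u_{m-1}=1+u_{m-1}$ for $2<r\le m/2$. Applied with $m=n-2$, after choosing the stem at the end of a longest path to force $r\le(n-2)/2$ and handling $r=2$ separately via the case-2 recursion, this yields $\phi(T)\ge 1+u_{n-3}$ for every non-star; and $1+u_{n-3}>u_{n-1}$ exactly when neither $n-1$ nor $n-2$ is triangular. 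Your ``superadditive identity for triangular numbers'' is adjacent to this but does not deliver it, and the bookkeeping $\sum_s r_s=n-1-e$ pushes in the wrong direction once $e>0$.

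For the maximum characterization, your sufficiency route $\spo(T)\ge\spo(F)=\sum_C\spo(C)$ does work for the pure degree-$\{1,3\}$ components, since a degree-$3$ stem in such a component has $r\in\{2,3\}$ and in either case the residual keeps the degree restriction; but the component carrying the degree-$6$ vertex is not closed under the recursion and needs separate treatment. The paper sidesteps this by inducting on $T$ rather than on the witness $F$: it fixes a stem $v$ of $T$, reads off $d_W(v)$, and shows case by case that the appropriate residual of $T$ inherits a witness. Your necessity sketch is accurate in spirit, and the paper's case analysis is precisely the ``organizing the kept stems and the lone defect'' that you flag as the main obstacle.
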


Further results have been obtained by Gutowski, Krawczyk, West, Zajac, and
Zhu~\cite{GKWZZ}.  Let $G$ be an $n$-vertex graph.  Always $\spo(G)\le kn$ when
$G$ is $k$-colorable, because Painter can always color at least $\C M/r$
vertices when $M$ is marked.  Hence $\spo(G)\le (1+d)n$ when $G$ is
$d$-degenerate;~\cite{GKWZZ} proves $\spo(G)\le (1+\FR34 d)n$ (the bound must
be at least $(1+\FR12 d)n$, by $\FR n{d+1}K_{d+1}$).  Outerplanar graphs are
$2$-degenerate, but here~\cite{GKWZZ} improves the bound to $\spo(G)\le \FR73n$
(the bound must be at least $2n$, by $\FR n3 K_3$).  When $G$ is planar,
$4$-colorability yields $\spo(G)\le4n$, but~\cite{GKWZZ} improves the bound
to $\spo(G)\le 3.9857n$ (the bound must be at least $\FR52n$, by $\FR n4 K_4$).

A related parameter called the \emph{interactive sum choice number} was
introduced by Bonamy and Meeks~\cite{bonamy-meeks}.  Consider the following
game between two players, whom we call Requester and Supplier. Initially, each
vertex has an empty color list $L(v)$.  In each round of the game, Requester
selects a vertex $v$ and requests a new color for its list; Supplier
chooses a color not already present in $L(v)$ and adds it to the list.
The game continues until the lists $L$ are such that $G$ is
$L$-colorable.  Requester's goal is to minimize the total number of
requests (rounds), while Supplier's goal is to maximize it. The
\emph{interactive sum choice number} of $G$, written $\isc(G)$, is the
common value that both players can guarantee.

The interactive sum choice game is similar to the slow-coloring game, with
Requester analogous to Painter and Supplier analogous to Lister.  The games
differ in who starts each round: in slow-coloring, Painter responds to Lister,
while in interactive sum choice Supplier responds to Requester.  Bonamy and
Meeks posed the problem of computing $\isc(T)$ when $T$ is a forest, giving a
formula for the value on stars, which we state in our notation.
\begin{theorem}[Bonamy--Meeks~\cite{bonamy-meeks}]\label{thm:bonamy-star}
$\isc(K_{1,r})=r+1+u_r$. 
\end{theorem}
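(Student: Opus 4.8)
The plan is to exploit the simple structure of $L$-colorability for a star. Write $c$ for the center and $\ell_1,\dots,\ell_r$ for the leaves. Then $K_{1,r}$ is $L$-colorable if and only if every leaf has a nonempty list and the center has a color $a\in L(c)$ that is not the only color of any leaf; call such a color \emph{usable}. Thus a leaf obstructs exactly one color, namely its own, and only while its list is a singleton. Throughout I track the center's color set $A$ (of size $m$) together with the partition of the still-singleton leaves into color classes. Writing $E$ for the number of leaf colors beyond the first on each leaf, the total number of requests equals $r+m+E$, so both bounds reduce to controlling $m+E$.

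For the upper bound I would have Requester spend $r$ requests coloring every leaf once; Supplier's replies produce class sizes $n_1\ge\cdots\ge n_s$ with $\sum_i n_i=r$. Requester then finishes by whichever is cheaper of two plans: (a) request the center $s+1$ times, which exhausts Supplier's $s$ obstructing colors and forces a usable one; or (b) for some $j$, request the center $j$ times and then give a second color to every leaf of the smallest class whose color reached the center, at cost at most $j+n_j$. Hence Requester guarantees $r+\min\{\,s+1,\ \min_j(j+n_j)\,\}$, and a short optimization shows this minimum is at most $u_r+1$ for every admissible profile, the extremal profile being the staircase $n_j\approx g-j$; the relevant threshold is exactly $t_{u_r}=\binom{u_r+1}{2}\le r$. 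This yields $\isc(K_{1,r})\le r+1+u_r$.

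For the lower bound, set $g=u_r+1$, so $\binom{g}{2}=t_{u_r}\le r$. I would have Supplier fix in advance a matching between center colors and classes: the $i$-th color the center ever receives is matched to a class $B_i$ of target size $g-i$ (for $1\le i\le g-1$). Supplier assigns each class a color lazily, on its first use by either a leaf or a center request, and keeps it consistent; on the $i$-th center request Supplier returns $B_i$'s color, and on each first-time leaf request Supplier colors the leaf with the color of some not-yet-full class. Because the total target size is $\binom{g}{2}\le r$, by the time all leaves are colored (which any colorable position requires) every class has reached its target size. The point is that to make a center color $b$ usable, Requester must give a second color to every leaf of the class matched to $b$: if $b$ is the $i^\ast$-th center color then $m\ge i^\ast$ while $E\ge g-i^\ast$, so $m+E\ge g$ and the total is at least $r+g$.

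The main obstacle is the robustness of the lower-bound strategy to the order of play: Requester may interleave leaf and center requests arbitrarily, may request the center before any leaf is colored (forcing Supplier to commit colors with no leaf yet available to block them), or may color all leaves before touching the center (so that the obstructing classes must already be in place). The crux is therefore the single online rule above—fixing the index-to-class matching and filling classes greedily—and the verification that it preserves the invariant $m+E\ge g$ against every interleaving; the triangular bound $\binom{u_r+1}{2}\le r<\binom{u_r+2}{2}$ is exactly what guarantees that the staircase of classes both fits within the $r$ leaves and cannot be defeated more cheaply.
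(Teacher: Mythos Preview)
The paper does not prove this theorem; it is quoted from Bonamy--Meeks and used as a black box, so there is no in-paper argument to compare against directly. Your approach is correct, and it is precisely the star specialization of the strategies the paper later develops in Section~\ref{sec:isc-spo} for general forests (Lemmas~\ref{lem:nontriang}--\ref{isctrilb}): Requester colors every leaf once, then keeps requesting at the center and frees the first sufficiently small class; Supplier pre-commits a staircase of classes $B_i$ of target size $u_r+1-i$ keyed to the $i$th center color. The paper even remarks at the end of that section that its arguments ``can be modified to handle also the case of stars,'' which is exactly what you have done.

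Two places deserve one extra sentence when you write it out. In the upper bound, the claim that option~(b) costs at most $j+n_j$ tacitly uses that among any $j$ center colors Supplier can produce, the smallest corresponding class has size at most $n_j$; this is a one-line pigeonhole (Supplier cannot draw $j$ distinct colors all from the $j-1$ largest classes), but you should state it. In the lower bound, say where the surplus $r-t_{u_r}$ leaves go once the staircase is full (dumping them into $B_1$, or into a fresh class whose color is never offered at the center, both work since you only need $\sizeof{B_i}\ge g-i$), and make explicit that the game cannot terminate until every leaf has a nonempty list---that is what guarantees every $B_i$ reaches its target regardless of how Requester interleaves center and leaf requests.
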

The formula in Theorem~\ref{thm:bonamy-star} agrees with the formula 
for $\spo(K_{1,n-1})$ in Theorem~\ref{treethm}, so $\isc(T) = \spo(T)$ when
$T$ is a star.  In Section~\ref{sec:isc-spo}, we prove $\isc(T) = \spo(T)$ for
every forest $T$.  Thus Theorem~\ref{thm:main} also provides an algorithm to
compute $\isc(T)$ on forests.

Bonamy and Meeks proved $\isc(C_n)=3n/2+1$ for the $n$-cycle $C_n$
with $n$ even, but one can obtain $\spo(C_n)=\CL{3n/2}$ using the
value of $\spo(P_n)$; thus $\spo(C_{2k})<\isc(C_2k)$.  Bonamy and Meeks also
noted that $\isc(G)\le\chisc(G)$ for all $G$, since Requester can achieve list
sizes $f$ such that $G$ is $f$-choosable; we ask whether there are graphs with
$\spo(G)>\chisc(G)$.

\section{Basic Observations}
Unlike sum-paintability, \cost\ is given by an easily described (but hard to
compute) recursive formula.  The key point is that prior choices do not affect
Painter's optimal strategy for coloring subsets of marked sets on the remaining
subgraph.

\begin{proposition}[\rm\cite{MPW}]\label{pr:recur}
$\quad\spo(G)=\displaystyle{\max_{\nul\ne M\esub V(G)}}
\left(\C{M} + \min\,\spo(G-I)\right)$,\\ where the minimum is taken over
subsets $I$ of $M$ that are independent in $G$.
\end{proposition}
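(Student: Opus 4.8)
The plan is to view the slow-coloring game as a finite, perfect-information, zero-sum game and to extract the recursion from a single application of the minimax principle, the crucial input being that the game is \emph{memoryless}: the value of the remaining play depends only on the set of still-uncolored vertices, not on the history of which colors were assigned in earlier rounds. First I would formalize the state of the game at any point as the set $U$ of uncolored vertices. Since each round introduces a fresh color and Painter is only ever required to color a set that is independent, the sole constraint on future play is independence, and a set $I\esub U$ is independent in $G$ if and only if it is independent in the induced subgraph on $U$. Consequently, neither Lister's payoff $\C M$ nor the legality of Painter's response depends on the coloring history, so the optimal remaining value from state $U$ is a function of the induced subgraph on $U$ alone; I would argue that it equals $\spo$ of that subgraph, since the remaining play is isomorphic to a fresh game on it.

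Granting memorylessness, I would prove the formula by induction on $\C{V(G)}$. The base case is $V(G)=\nul$, where the game has ended and both sides score $0$. For the inductive step, consider the first round on $G$. Lister, moving first, marks a nonempty set $M$; Painter then colors an independent $I\esub M$, after which the remaining play is exactly the slow-coloring game on $G-I$, whose value is $\spo(G-I)$ by the induction hypothesis. Since the round contributes $\C M$ to the score and the two players optimize in sequence (Lister maximizing over $M$, then Painter minimizing over the admissible $I$), the minimax value of the game is $\max_{\nul\ne M\esub V(G)}\bigl(\C M+\min_I\spo(G-I)\bigr)$, with the inner minimum taken over independent subsets $I\esub M$. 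This is precisely the claimed identity.

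The main point to handle carefully is the memorylessness itself, together with the well-foundedness that makes the displayed recursion meaningful. For well-foundedness I would note that Painter can always color at least one vertex: as $M\ne\nul$, any single $v\in M$ forms an independent set, so $G-I$ has strictly fewer vertices whenever Painter plays optimally; moreover $I=\nul$ is never optimal, since it would force the comparison $\C M+\spo(G)>\spo(G)$, so the minimum is attained at a nonempty independent set and the number of uncolored vertices strictly decreases each round. The subtler ingredient, which I expect to be the heart of the argument, is the memoryless claim: one must verify that combining an optimal Painter strategy on $G-I$ with the colors already committed yields a proper coloring of all of $G$. This holds because distinct rounds use distinct colors and each color class is independent by the rules, so the already-colored vertices impose no constraint on later rounds beyond their removal, and no interaction between past and future choices can occur.
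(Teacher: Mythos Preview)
Your argument is correct and follows essentially the same line as the paper's proof, which is a one-sentence appeal to the game-tree structure: given any first marked set $M$, Painter's optimal response is an independent $I\subseteq M$ minimizing $\spo(G-I)$. You have simply made explicit the memorylessness and termination considerations that the paper leaves tacit.
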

\begin{proof}
In response to any initial marked set $M$, Painter chooses an independent
subset $I\esub M$ to minimize the cost of the remainder of the game.
\end{proof}

In studying optimal strategies for Lister and Painter, not all legal moves
need be considered.  Let $G[S]$ denote the subgraph of $G$ induced by a set
$S\esub V(G)$.

\begin{observation}[\rm\cite{MPW}]\label{simple}
On any graph, there are optimal strategies for Lister and Painter such that
Lister always marks a set $M$ inducing a connected subgraph $G[M]$, and Painter
always colors a maximal independent subset of $M$.
\end{observation}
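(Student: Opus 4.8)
The statement comprises two independent existence claims: that Painter has an optimal strategy always coloring a maximal independent subset of the marked set, and that Lister has an optimal strategy always marking a set $M$ with $G[M]$ connected. I would prove each by a local exchange argument justified through the recursion in Proposition~\ref{pr:recur}. The Painter claim rests on a monotonicity lemma, which I expect to be the main obstacle; the Lister claim rests on a decomposition of the optimal response over the components of $G[M]$.

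First I would establish monotonicity: if $H$ is obtained from $G$ by deleting vertices, then $\spo(H)\le\spo(G)$; it suffices to treat $H=G-v$ and iterate. Rather than manipulating the recursion directly (where the direction of the inequalities fights against the inner minimum), I would argue by strategy simulation. Fix an optimal Painter strategy $\sigma$ on $G$. Painter plays on $G-v$ while maintaining a shadow game on $G$, feeding $\sigma$ exactly the sets that the real Lister marks and copying $\sigma$'s colorings into the real game; since $v$ is never marked in the shadow, $\sigma$ never colors it, so the invariant that the colored sets agree on $V(G)\setminus\{v\}$ is maintained and every real move is legal in the shadow. When the real game ends, all of $V(G)\setminus\{v\}$ is colored in the shadow and only $v$ remains; imagining the shadow Lister now marking $\{v\}$ forces $\sigma$ to finish at a total cost that is at most $\spo(G)$ and at least the real score plus one, so $\spo(G-v)\le\spo(G)-1\le\spo(G)$. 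Given monotonicity, the Painter claim is immediate: for any independent $I\esub M$, extend it within $M$ to a maximal independent set $I'$; then $G-I'$ is an induced subgraph of $G-I$, so $\spo(G-I')\le\spo(G-I)$, and coloring $I'$ is at least as good for Painter as coloring $I$. Thus the inner minimum in Proposition~\ref{pr:recur} is attained at a maximal independent set, and Painter has an optimal strategy of the required form.

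For the Lister claim, suppose an optimal Lister strategy marks a set $M$ for which $G[M]$ has components $M_1,\dots,M_k$ with $k\ge2$, each inducing a connected subgraph. I would replace this single move by marking $M_1,\dots,M_k$ in $k$ consecutive rounds. The total score is unchanged, since $\sum_j\C{M_j}=\C M$. The crucial point is that the continuations agree in value: because there are no edges of $G$ between distinct components, an independent subset $I\esub M$ corresponds bijectively to a tuple $(I_1,\dots,I_k)$ with each $I_j\esub M_j$ independent, and $G-I$ depends only on the union $\bigcup_j I_j$. Consequently the sequential minimization carried out when the components are marked one at a time, namely $\min_{I_1}\cdots\min_{I_k}\spo(G-\bigcup_j I_j)$, equals the joint minimum $\min_{I\esub M}\spo(G-I)$ from Proposition~\ref{pr:recur}. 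So the value Lister can guarantee is identical, and replacing disconnected marks by their components throughout yields an optimal Lister strategy that marks only connected sets. Combining the two constructions gives optimal strategies of both prescribed forms, as claimed. The main difficulty is the monotonicity lemma: the recursion is not manifestly monotone, and the shadow-game coupling, together with the care needed to keep the simulated moves legal and to account for the final cost of coloring $v$, is where the argument must be made precise.
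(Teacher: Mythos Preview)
Your proof is correct and follows essentially the same approach as the paper's: replacing a disconnected marked set by its components for Lister, and enlarging any independent response to a maximal one for Painter. The paper states these exchange arguments in a single sentence each, taking monotonicity (``coloring extra vertices at no extra cost cannot hurt Painter'') for granted, whereas you supply a careful shadow-game justification of $\spo(G-v)\le\spo(G)$; this extra rigor is fine but not something the paper spells out.
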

\begin{proof}
A move in which Lister marks a disconnected set $M$ can be replaced with
successive moves marking the vertex sets of the components of $G[M]$.
For the second statement, coloring extra vertices at no extra cost cannot hurt
Painter.
\end{proof}

Other easy observations yield useful bounds.  The lower bound below was
observed in~\cite{MPW}.  Let $[A,B]$ denote the set of edges with one endpoint
in $A$ and one endpoint in $B$.

\begin{lemma}\label{lem:cutlemma}\label{cutlemma}
  If $G$ is a graph and $(A,B)$ is a bipartition of $V(G)$, then
\[
\spo(G[A])+\spo(G[B])\le\spo(G)\le\spo(G[A]) + \spo(G[B]) + \sizeof{[A,B]}.
\]
\end{lemma}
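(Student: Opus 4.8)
The two inequalities call for opposite strategic viewpoints, so I would prove them separately, and in each case the proof is a strategy for the player being bounded.

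For the lower bound, the plan is to have Lister play the two induced games one after the other. First Lister runs an optimal Lister strategy for the game on $G[A]$, always marking subsets of the \emph{uncolored} vertices of $A$. Since such a marked set $M$ lies entirely in $A$, every independent subset $I\esub M$ that Painter colors is independent in $G[A]$ as well as in $G$; thus the restriction of the play to $A$ is a legitimate instance of the slow-coloring game on $G[A]$, and Lister's strategy guarantees a score of at least $\spo(G[A])$ before $A$ is exhausted. Crucially, during this phase Lister never marks a vertex of $B$, so no vertex of $B$ is ever colored. Once $A$ is fully colored, Lister switches to an optimal strategy for $G[B]$ on the still-untouched set $B$, earning at least $\spo(G[B])$ more. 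Summing the two phases yields $\spo(G)\ge\spo(G[A])+\spo(G[B])$. This direction is routine; the only thing to verify is that confining Lister to one side at a time genuinely reproduces the standalone game, which is immediate because $A$ and $B$ induce $G[A]$ and $G[B]$.

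For the upper bound I would give Painter a strategy driven by two \emph{shadow games}, one played with an optimal Painter strategy on $G[A]$ and one on $G[B]$. When Lister marks $M$, split it as $M_A=M\cap A$ and $M_B=M\cap B$. Painter first feeds $M_A$ to the $A$-shadow, receives an independent response $I_A$, and colors $I_A$ for real. Painter then feeds to the $B$-shadow only the reduced set $M_B'=M_B\setminus N(I_A)$ --- the vertices of $M_B$ not adjacent to anything just colored --- receives $I_B$, and colors it. By construction $I_A\cup I_B$ is independent in $G$, so this is a legal move, while the withheld vertices $M_B\cap N(I_A)$ stay uncolored in both the real game and the $B$-shadow, keeping that shadow synchronized. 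Because each shadow is run by an optimal Painter, the total cost charged to the $A$-shadow is at most $\spo(G[A])$ and to the $B$-shadow at most $\spo(G[B])$, no matter how Painter feeds them.

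The heart of the argument --- and the step I expect to be the main obstacle --- is bounding the per-round gap between the real cost $\sizeof{M}=\sizeof{M_A}+\sizeof{M_B}$ and the cost $\sizeof{M_A}+\sizeof{M_B'}$ charged to the shadows, which equals $\sizeof{M_B\cap N(I_A)}$. I would charge each withheld vertex $b\in M_B\cap N(I_A)$ to a single cross edge $ab$ with $a\in I_A$. Since $a$ is colored and hence permanently removed in this very round, that cross edge can never be charged again; distinct withheld vertices in one round are incident to distinct cross edges, and across rounds no edge is reused. Summing, the total of these gaps over the whole game is at most $\sizeof{[A,B]}$, giving $\spo(G)\le\spo(G[A])+\spo(G[B])+\sizeof{[A,B]}$. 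The delicate points to nail down are that withholding vertices really keeps the shadow synchronized (so the shadow-cost guarantees apply unchanged) and that the edge-charging is genuinely injective over the entire game; termination is easy, since every nonempty $M$ advances at least one shadow (by Observation~\ref{simple} an optimal Painter colors a nonempty set whenever a nonempty set is marked).
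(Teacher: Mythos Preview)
Your proof is correct and follows essentially the same approach as the paper's. For the lower bound you both have Lister play the two sides sequentially; for the upper bound you both have Painter run optimal strategies on $G[A]$ and $G[B]$, withholding from the $B$-strategy those marked $B$-vertices adjacent to what the $A$-strategy just colored, and charging each such withheld vertex to a cross edge whose $A$-endpoint has just been permanently removed---exactly the paper's ``extra token'' accounting, only spelled out in more detail.
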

\begin{proof}
For the lower bound, Lister can play an optimal strategy on $G[A]$ while
ignoring the rest and then do the same on $G[B]$, achieving the score
$\spo(G[A])+\spo(G[B])$.

For the upper bound, Painter uses optimal strategies on $G[A]$ and $G[B]$.
When doing so requests coloring of both endpoints of an edge in $[A,B]$,
Painter allocates an extra token to each such endpoint $v$ in $B$ and instead
makes the optimal response in $B$ to the marked set obtained by omitting those
vertices from the actual marked set.  Each edge of the cut acts in this way at
most once, because when it does the endpoint in $A$ is colored.
%
\end{proof}

If $\VEC T1k$ are trees such that $|V(T_i)|=n_i$ and $\spo(T_i)=3n_i/2$, then
the disjoint union of $\VEC T1k$ is a forest $F$ with $\spo(F)=3|V(F)|/2$, by
the lower bound in Lemma~\ref{cutlemma}.  Adding edges to turn $F$ into an
$n$-vertex tree $T$ does not reduce the sum-color cost, so $\spo(T)=3n/2$, by
Theorem~\ref{treethm}.  Thus there is a huge variety of trees achieving the
maximum.  Nevertheless, Theorem~\ref{charmaxa} states a simple structural
characterization.

Our algorithm to compute $\spo$ on trees combines the bounds in 
Lemma~\ref{cutlemma} with an understanding of slow-coloring on stars.
By Theorem~\ref{treethm}, $\spo(K_{1,r})=r+1+u_r$.  Therefore, when $r+1$ is
not a triangular number, the statement of Theorem~\ref{thm:main} is
$\spo(T)=\spo(T[A])+\spo(T[B])$, where $B=R\cup\{v\}$ and $A=V(T)-B$.
There is only one edge joining $A$ and $B$, so proving this case only requires
saving $1$ in the upper bound from Lemma~\ref{cutlemma}.

Doing this requires a closer look at optimal play on a star.  The claim we need
includes a computation of $\spo(K_{1,r})$, making our presentation
self-contained.  In \cite{MPW}, $\spo(K_{1,r})$ was computed as a special case
of the join of a complete graph with an independent set.  Our argument for the
special case is somewhat simpler and gives additional information about what
moves are optimal.  Recall that $u_r=\max\{k\st t_k \le r\}$, where
$t_k=\CH{k+1}2$.

\begin{lemma}[\rm\cite{MPW}]\label{uplus}
$u_{r-u_r} = u_r$ when $r+1$ is triangular, and otherwise $u_{r-u_r} = u_r-1$.
\end{lemma}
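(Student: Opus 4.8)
The plan is to set $k=u_r$ and argue directly from the defining inequality $t_k\le r<t_{k+1}$, translating everything into a statement about where $r-k$ sits among the triangular numbers. I would first record $t_j=j(j+1)/2$ and note that $u_r\ge 1$ whenever $r\ge 1$, so I may assume $k\ge 1$; the degenerate case $r=0$ is immediate, since then $r-u_r=0=u_r$ and $r+1=1$ is triangular.

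Next I would locate $r-k$ precisely. Subtracting $k$ from $t_k\le r$ gives $r-k\ge t_k-k=k(k-1)/2=t_{k-1}$, while subtracting $k$ from $r<t_{k+1}$ gives $r-k<t_{k+1}-k=t_k+1$, hence $r-k\le t_k$. Thus $t_{k-1}\le r-k\le t_k$, which forces $u_{r-k}\in\{k-1,k\}$, with $u_{r-k}=k$ holding exactly when the upper bound is tight, that is, when $r-k=t_k$.

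The final step is to recognize the condition $r-k=t_k$ as triangularity of $r+1$. A one-line computation gives $t_k+k=t_{k+1}-1$, so $r-k=t_k$ is equivalent to $r=t_{k+1}-1$, i.e. to $r+1=t_{k+1}$. Moreover $t_k\le r<t_{k+1}$ yields $t_k<r+1\le t_{k+1}$, and since $t_k$ and $t_{k+1}$ are consecutive triangular numbers, the only triangular value in this range is $t_{k+1}$; hence $r+1$ is triangular if and only if $r+1=t_{k+1}$. Combining the pieces, $r+1$ triangular gives $u_{r-k}=k=u_r$, and $r+1$ non-triangular gives $u_{r-k}=k-1=u_r-1$, as claimed.

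I do not expect a genuine obstacle: the argument is elementary algebra on triangular numbers. The only point needing slight care is the separate treatment of the degenerate case $k=0$ (equivalently $r=0$), since the bound $r-k\ge t_{k-1}$ and the consecutiveness argument both presume $k\ge 1$.
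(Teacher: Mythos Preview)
Your argument is correct and is essentially the paper's own proof, just written out with more care: both set $k=u_r$, use $t_k\le r<t_{k+1}$, and locate $r-k$ among the triangular numbers via $t_{k-1}\le r-k\le t_k$. Your explicit handling of the degenerate case $r=0$ and the verification that ``$r+1$ triangular'' is equivalent to ``$r+1=t_{k+1}$'' are details the paper leaves implicit.
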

\begin{proof}
If $u_r=k$, then $t_k\le r<t_{k+1}$.  Also $t_{k+1}-t_k=k+1$.  Thus $r-k=t_k$
if $r+1=t_{k+1}$, yielding $u_{r-u_r}=u_r$.  Otherwise $t_{k-1}<r-k<t_k$, which
yields $u_{r-u_r}=u_r-1$.
\end{proof}

\begin{theorem}\label{thm:split}\label{star}
{\rm$\spo(K_{1,r}) = r+1+u_r$}.  Any optimal first move for Lister marks
the center and $p$ leaves, where $u_r\le p\le u_r+r-t_{u_r}$, or marks only $p$
leaves, where $1\le p\le r-t_{u_r}$.  Painter can respond optimally by coloring
the marked leaves, except that when $r+1$ is triangular and Lister marks the
center and exactly $u_r$ leaves, the only optimal response for Painter is to
color the center.  Finally, if $r+1$ is not triangular and after one round of
optimal play the uncolored subgraph is $K_{1,r'}$, then again $r'+1$ is not
triangular.
\end{theorem}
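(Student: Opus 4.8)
The plan is to prove the formula $\spo(K_{1,r}) = r+1+u_r$ by establishing matching bounds, and then to read off the structural claims about optimal moves from the extremal analysis. I would set up the argument recursively using Proposition~\ref{pr:recur}, letting the star have center $c$ and leaves forming the independent set $R$ with $|R| = r$. The key observation is that by Observation~\ref{simple} and the structure of $K_{1,r}$, Lister's marked set $M$ is determined up to isomorphism by whether it contains the center and how many leaves it contains: say $M$ consists of the center (or not) together with $p$ leaves. Painter's independent responses are then limited: if the center is unmarked, Painter colors all $p$ marked leaves (they are independent), reducing to $K_{1,r-p}$; if the center is marked, Painter either colors the center (leaving $K_{1,r-p}$ on the unmarked leaves, but now the center is gone so the remainder is an independent set of $r-p$ vertices costing $r-p$) or colors the $p$ marked leaves (leaving $K_{1,r-p}$). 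So the recursion reduces to comparing these options.

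\medskip

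First I would prove the upper bound $\spo(K_{1,r}) \le r+1+u_r$ by exhibiting Painter's strategy and verifying it via the recursion, inducting on $r$. When the center is marked with $p$ leaves, Painter compares coloring the leaves (cost $|M| + \spo(K_{1,r-p})$) against coloring the center (cost $|M| + (r-p)$, since the remaining graph is an independent set). Using the inductive value $\spo(K_{1,r-p}) = (r-p)+1+u_{r-p}$, coloring the center is at least as good exactly when $1 + u_{r-p} \ge 1$, which always holds, but the precise optimal choice depends on $u_{r-p}$ versus $0$; I would track carefully that coloring leaves keeps us in the star regime. The maximization over $M$ then needs the claim that Lister's best score is achieved by the stated marking ranges, and that the maximum equals $r+1+u_r$. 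The main computational content is Lemma~\ref{uplus}, which controls how $u_{r}$ changes when $u_r$ leaves are removed; this is exactly what makes the triangular-number dichotomy appear.

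\medskip

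For the lower bound $\spo(K_{1,r}) \ge r+1+u_r$, I would exhibit an explicit Lister strategy: Lister repeatedly marks the center together with a carefully chosen number of leaves, forcing Painter into a position where every response loses ground. The natural strategy is to mark the center and $u_r$ leaves on the first round; then whatever Painter does, the remaining graph is a star $K_{1,r'}$ (or an independent set) on which Lister continues inductively, and the scores accumulate to $r+1+u_r$. Verifying that this strategy indeed guarantees the claimed score is where Lemma~\ref{uplus} does the heavy lifting: it guarantees that after removing $u_r$ leaves the parameter $u_{r'}$ drops by exactly $1$ (unless $r+1$ is triangular, in which case it stays the same and the center gets colored), matching the ``$+u_r$'' term telescoping correctly.

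\medskip

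The structural conclusions then follow from examining which moves achieve equality. The range $u_r \le p \le u_r + r - t_{u_r}$ for marking the center and $p$ leaves, and $1 \le p \le r - t_{u_r}$ for marking only leaves, should emerge as precisely the markings for which Lister's score meets the bound; I would verify this by computing the score of each candidate move and checking it equals $r+1+u_r$ iff $p$ lies in the stated interval. The claim that Painter can respond by coloring leaves, with the single exception when $r+1$ is triangular and Lister marks the center and exactly $u_r$ leaves, comes from comparing the two Painter options in that boundary case using Lemma~\ref{uplus}: here $u_{r-u_r} = u_r$, so coloring leaves would leave $K_{1,r-u_r}$ with $(r-u_r)+1$ triangular, costing more, and coloring the center becomes strictly forced. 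The final assertion that the star regime is preserved (the uncolored subgraph $K_{1,r'}$ again has $r'+1$ non-triangular when $r+1$ is non-triangular) follows from the same lemma together with the restriction that optimal play keeps $p$ in the computed range. I expect the main obstacle to be bookkeeping the boundary case $r+1$ triangular cleanly, since that is exactly where Painter's optimal response switches from coloring leaves to coloring the center, and getting the inequalities to close tightly there is the delicate step.
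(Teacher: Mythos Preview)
Your overall approach (induction on $r$, case analysis on Lister's marked set via Proposition~\ref{pr:recur}, with Lemma~\ref{uplus} controlling the triangular dichotomy) matches the paper's, but the proposal contains a concrete computational error that would derail it. When Lister marks the center together with $p$ leaves and Painter responds by coloring the center, the remaining uncolored graph is the independent set of \emph{all} $r$ leaves, not $r-p$ leaves: Painter colors only the center, so the $p$ marked leaves remain uncolored along with the $r-p$ unmarked ones. Thus the remaining cost is $r$, not $r-p$, and that branch scores $(p+1)+r$. With your stated cost $|M|+(r-p)$, coloring the center would always be weakly best for Painter (your inequality ``$1+u_{r-p}\ge 1$ always holds'' reflects exactly this), and Lister's score would collapse to $r+1$ for every $p$, which is false.

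Once this is fixed, the Painter comparison is between remaining cost $r$ (color the center) and $r-p+1+u_{r-p}$ (color the marked leaves), giving
\[
\spo(K_{1,r})=r+1+\max_{p}\min\{p,\,1+u_{r-p}\},
\]
which is precisely the paper's computation. Since Proposition~\ref{pr:recur} is already an equality, there is no need for your separate upper and lower bound arguments; the paper simply solves this $\max$--$\min$ in one pass by locating the crossing point $p$ with $u_{r-p}=p-1$, and Lemma~\ref{uplus} pins it at $p=u_r$ (with the triangular case shifting the tie). The ranges for optimal Lister moves, the single exceptional Painter response, and the preservation of the non-triangular condition then all fall out from checking which $p$ achieve the maximum and which Painter responses achieve the inner minimum, exactly as you anticipated in the last paragraph.
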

\begin{proof}
We use induction on $r$, with basis $r=0$ using $u_0=0$.  Suppose $r>0$.

If Lister marks $p$ leaves and not the center, then Painter colors all marked
vertices.  The score is then $p+\spo(K_{1,r-p})$, which by the induction 
hypothesis equals $r+1+u_{r-p}$.  This equals the claimed value (and makes
the Lister move optimal) if $u_{r-p}=u_r$.  By monotonicity of $u$, this holds
if and only $r-p\ge t_{u_r}$, and then $r-p+1$ is again not triangular.

Now suppose that Lister marks the center and $p$ leaves.  Painter responds by
coloring the center or all marked leaves.  If the center is colored, then the
score in the remainder of the game will be exactly $r$.  Otherwise, the game
continues on a star with $r-p$ leaves.  Applying the recurrence of
Proposition~\ref{pr:recur} and the induction hypothesis,
\[
\spo(K_{1,r}) =
\max_p[p+1+\min\{r,\spo(K_{1,r-p})\}]
=r+1+\max_p\min\{p,1+u_{r-p}\}.
\]
Since $u_{r-p}$ is a decreasing function of $p$, with $u_{r-p}>p$ when $p=0$
and $u_{r-p}<p$ when $p=r$, we seek $p$ such that $u_{r-p}=p-1$.

When $r+1$ is not triangular, setting $p=u_r$ yields $u_{r-p}=p-1$, by
Lemma~\ref{uplus}.  Hence $p=u_r$ is optimal for Lister; Painter can color the
center or the leaves.  Smaller $p$ would yield smaller cost.  For $j>0$,
setting $p=u_r+j$ leads to cost $r+1+1+u_{r-p}$, which equals $r+1+u_r$ as long
as $r-p\ge t_{u_r-1}$.  Hence Lister can mark up to $u_r+(r-t_{u_r})$ leaves,
which Painter must color.  When $p=u_r$, Painter may color either the center or
the marked leaves.

In these cases, with Painter coloring leaves, the number $r'$ of leaves
remaining is $r-p$.  We required $p\le u_r+(r-t_{u_r})$ to enforce
$r-p\ge t_{u_r-1}$.  Also, $r<t_{u_{r+1}}-1$ and $p\ge u_r$ yield
$r-p<t_{u_{r+1}}-u_r-1=t_{u_r}-1$.  Thus $t_{u_r-1}\le r-p< t_{u_r}-1$, and
$r'+1$ is not triangular.

When $r+1$ is triangular, setting $p=u_r$ yields $p=u_{r-p}$ (by
Lemma~\ref{uplus}), and Painter must color the center.  However, in this case
also $1+u_{r-u_r-1}=u_{r-u_r}=u_r$.  Thus setting $p$ to be $u_r+1$ again
yields $\min\{p,1+u_{r-p}\}=u_r$, but now Painter must color the leaves to
respond optimally.  As in the previous case, Lister can mark as many as
$u_r+(r-t_{u_r})$ leaves, which in this case equals $2u_r$.  Still Painter
must color the leaves.
\end{proof}

\section{Main Result}\label{sec:main}
When $T$ is a forest of stars, the statement of our main theorem
(Theorem~\ref{thm:main}) reduces to the value given in Theorem~\ref{star}
for stars.  Note that the basis for the main theorem, $\spo(T)=\C{V(T)}$ when
$T$ has no edges, includes the case of the null graph, $\C{V(T)}=0$.

When $T$ has a component that is not a star, the computation in
Theorem~\ref{thm:main} tells us to break off a star having only one nonleaf
neighbor.  The center of such a star is a stem.  Such vertices have sometimes
been called ``penultimate'' vertices, but a stem need not be adjacent to an
endpoint of a longest path.

The cases in proving our main theorem (Theorem~\ref{thm:main}) are based
on Lemma~\ref{uplus}.  Throughout this section, $v$ is a stem in a
forest $T$, the set of leaf neighbors of $v$ is $R$, and $r=|R|\ge1$.
The analysis is easy when $r+1$ is not triangular.

\begin{lemma}\label{nontriang}
If $r+1$ is not triangular, then $\spo(T) = \spo(T-R-v) + \spo(K_{1,r})$.
\end{lemma}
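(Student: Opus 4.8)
The plan is to sandwich $\spo(T)$ between matching bounds obtained from Lemma~\ref{cutlemma}, applied to the bipartition $A=V(T)-R-v$ and $B=R\cup\{v\}$. Here $T[B]=K_{1,r}$ and $T[A]=T-R-v$, and since $v$ is a stem it has at most one non-leaf neighbor $w\in A$, so the cut $[A,B]$ contains at most one edge. The lower bound $\spo(T)\ge\spo(T-R-v)+\spo(K_{1,r})$ is immediate from Lemma~\ref{cutlemma}. If the cut is empty, then $K_{1,r}$ is a full component and the upper bound follows from Lemma~\ref{cutlemma} as well; so the real work is the case of a single cut edge $vw$, where Lemma~\ref{cutlemma} only gives $\spo(T)\le\spo(T-R-v)+\spo(K_{1,r})+1$ and I must recover that last token.

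To do so I would have Painter run an optimal strategy on $T[A]$ in parallel with the leaf-favoring optimal strategy on the star furnished by Theorem~\ref{star}. The two colorings combine legally except in a round that simultaneously colors both ends of $vw$, i.e.\ a round in which the star-strategy colors the center $v$ while the $A$-strategy colors $w$; because coloring either end deletes $vw$, such a clash happens at most once. At that round Painter colors one endpoint and defers the other, so the extra token from Lemma~\ref{cutlemma} is charged only to the deferred side.

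The saving comes from Theorem~\ref{star}: as long as the current star $K_{1,r_0}$ has $r_0+1$ not triangular, the star-strategy is forced to color $v$ only when Lister marks the center together with fewer than $u_{r_0}$ leaves, which (via Lemma~\ref{uplus}) is a suboptimal Lister move on the star worth at least one token of slack. Painter defers the endpoint lying on this slack-bearing side, so the deferral's $+1$ is cancelled, giving $\spo(T)\le\spo(T-R-v)+\spo(K_{1,r})$ and hence equality. The main obstacle I anticipate is the amortized bookkeeping in this last step: phrasing ``suboptimal by a token'' as a drop in the potential $\spo(\cdot)$, checking that the slack and the forced deferral occur on the same side, and handling the boundary case in which the star has already shrunk to its center—there the needed slack must instead be supplied by an earlier suboptimal Lister move, which the persistence of non-triangularity in Theorem~\ref{star} guarantees.
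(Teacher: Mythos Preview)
Your proposal uses exactly the paper's ingredients --- the bipartition in Lemma~\ref{cutlemma}, parallel optimal play on $T[A]$ and on the star, and the leaf-preference supplied by Theorem~\ref{star} --- but the paper packages the bookkeeping as an induction on $r$, and that repackaging is where the substance lies. The base case $r=1$ is the step you are missing: when the star is a single edge, \emph{both} star-responses (color $v$ or the lone leaf) are optimal, so Painter coordinates with the $A$-strategy, taking $v$ when the $A$-strategy does not take $w$ and taking the leaf otherwise; this severs $vw$ in that very round at zero extra cost. The inductive step for $r>1$ then simply has Painter color only leaves on the star side (optimal when $r+1$ is non-triangular, by Theorem~\ref{star}) and invokes the hypothesis on the smaller $r'$.

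Your direct amortized version breaks at precisely the boundary you flag. Under fully optimal Lister play on the star, the leaf-favoring strategy is never forced to take $v$: the star shrinks through non-triangular sizes down to $r_0=1$, and one further (optimal) round of leaf-coloring brings it to $r_0=0$, after which $v$ must be colored with no suboptimal Lister move anywhere in the history to supply the slack. So the clause ``the persistence of non-triangularity in Theorem~\ref{star} guarantees'' an earlier suboptimal move is false along this line of play; what is needed instead is the $r=1$ coordination trick. Separately, in the rounds where slack genuinely exists (center marked together with $p<u_{r_0}$ leaves), deferring on the slack-bearing side is the wrong direction: the slack of $u_{r_0}-p$ is realized only if Painter actually plays the optimal star reply, which is to color $v$; the $+1$ should therefore be paid by deferring $w$ on the $A$ side, letting the $B$-side saving cancel it.
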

\begin{proof}
With $B=R\cup\{v\}$ and $A=V(T)-B$, we have $T[B] \cong K_{1,r}$, and the lower
bound in Lemma~\ref{lem:cutlemma} yields $\spo(T)\ge\spo(T-R-v)+\spo(K_{1,r})$.
If $N(v) \cap A = \nul$, then the upper bound in Lemma~\ref{lem:cutlemma}
yields the desired equality, so we may assume $v$ has a neighbor in $A$.
\looseness-1

For the upper bound, we use induction on $r$.  When $r=1$, let $w$ and $z$ be
the neighbors of $v$ in $A$ and $B$, respectively.  Let $M$ be the first move
by Lister.  We gain the needed $1$ over the upper bound in
Lemma~\ref{lem:cutlemma} unless $M\cap A$ is an optimal first move in
$T[A]$ and $M\cap B$ is an optimal first move in $T[B]$.  Hence Lister marks
both vertices of $B$, and an optimal response by Painter in $T[B]$ colors one
of them.  Let $S$ be an optimal response for Painter to $M\cap A$.  If $w\in S$,
then Painter colors $S\cup\{z\}$.  If $w\notin S$, then Painter colors
$S\cup\{v\}$.  In either case, the edge $wv$ is gone, and we never allocate an
extra token for it, so optimal play in $A$ and $B$ separately continues,
yielding the desired upper bound.

When $r>1$ and $r+1$ is not triangular, Painter has an optimal response to
optimal play by Lister on $T[A]$ and $T[B]$ separately that colors only leaves
from $T[B]$.  Hence Painter can make this response, leaving $T[A'\cup B']$,
where $A'\esub A$ and $B'\esub B$.  By Lemma~\ref{uplus}, $T[B']$ is a star
with center $v$ and $r'$ leaves, where $r'+1$ is not triangular.  By the
induction hypothesis, Painter has a strategy to complete the game with
additional cost at most $\spo(T[A'])+\spo(K_{1,r'})$.  Since the initial round
was also by optimal play, the total cost is at most $\spo(T[A])+\spo(K_{1,r})$.
\end{proof}

\begin{lemma}\label{lem:penupper}
Always $\spo(T) \leq \spo(T-R)+r+u_r$.
\end{lemma}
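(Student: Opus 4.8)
The plan is to prove the stronger, purely local statement that attaching pendant leaves is cheap: for every graph $G$, vertex $v$, and integer $r\ge 0$, writing $G^{+r}$ for $G$ together with $r$ new leaves attached at $v$, we have $\spo(G^{+r})\le \spo(G)+r+u_r$. Since the leaf-neighbors $R$ of $v$ are exactly the pendants at $v$, we have $T=(T-R)^{+r}$, so this specializes to the desired bound with $G=T-R$. I would prove the general claim by strong induction on the total number of vertices $\C{V(G)}+r$, the base case being the empty graph. Note that the induction is on vertex count, not on $r$: the reductions below sometimes delete vertices of $G$ while leaving $r$ unchanged.

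Fix a Lister move $M$ in $G^{+r}$, write $m=\C{M\cap R}$ for the number of marked leaves and $M'=M\setminus R$ for the marked part of $G$, and let $J^*\subseteq M'$ be a response in $G$ minimizing $\spo(G-J^*)$, so that Proposition~\ref{pr:recur} gives $\spo(G-J^*)\le \spo(G)-\C{M'}$. Painter responds according to three cases. If some minimizing $J^*$ omits $v$, Painter colors $J^*$ together with all marked leaves (legal, since each leaf's only neighbor is $v\notin J^*$); the remaining position is $(G-J^*)^{+(r-m)}$, and the induction hypothesis bounds the total by $\C M+\spo(G-J^*)+(r-m)+u_{r-m}\le \spo(G)+r+u_{r-m}\le\spo(G)+r+u_r$. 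If every minimizing response contains $v$, Painter splits on $m$. When $m\le u_r$, Painter colors $J^*$ itself, leaving $G-J^*$ disjoint from the now-isolated $r$ leaves, so by the additivity in Lemma~\ref{lem:cutlemma} the total is at most $\C M+\spo(G-J^*)+r\le\spo(G)+r+m\le\spo(G)+r+u_r$. When $m> u_r$, Painter instead keeps $v$ uncolored, coloring the marked leaves together with a minimizing response $I_1$ to $M'\setminus\{v\}$; this leaves $(G-I_1)^{+(r-m)}$, and the induction hypothesis yields total at most $\spo(G)+r+1+u_{r-m}$.

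These three bounds cover all cases once one checks two facts. Monotonicity of $u$ handles the first two cases directly. For the last case I use $m>u_r$ together with the defining inequality $r<t_{u_r+1}=t_{u_r}+u_r+1$, so $r\le t_{u_r}+u_r$ and hence $r-m<t_{u_r}$; this gives $u_{r-m}\le u_r-1$, whence $1+u_{r-m}\le u_r$ and the bound $\spo(G)+r+1+u_{r-m}\le\spo(G)+r+u_r$. I also need each reduction to strictly decrease $\C{V(G)}+r$: a minimizing response to a nonempty marked set is nonempty, because deleting any single marked vertex strictly lowers $\spo$ by the lower bound of Lemma~\ref{lem:cutlemma}; thus when $M'\ne\varnothing$ we have $J^*\ne\varnothing$, and otherwise $m\ge1$.

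The step I expect to be the crux is the case $v\in M$. The temptation is to settle it in one shot by bounding $\spo(G-v)$ or $\spo(G-w)$ through the cut lemma, but those estimates lose a unit and create a spurious obstruction precisely when $r+1$ is triangular, mirroring the forced center-coloring identified in Theorem~\ref{star}. The idea that avoids this is to compare against the \emph{minimizing} core response $J^*$ and to route the residual position $(G-J^*)^{+(r-m)}$ back through the induction hypothesis rather than estimating it in a single step: the unit a one-shot argument would lose is exactly absorbed by the inductive term $u_{r-m}$. Confirming that the threshold $m=u_r$ cleanly separates the ``color $v$'' and ``keep $v$'' responses, via $r\le t_{u_r}+u_r$ (essentially Lemma~\ref{uplus}), is the remaining delicate point. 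It is worth remarking that the stem hypothesis is not actually needed for this upper bound; it serves only to guarantee the identity $T=(T-R)^{+r}$.
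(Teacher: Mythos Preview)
Your argument is correct, and it takes a genuinely different route from the paper's proof. The paper first disposes of the non-triangular case via Lemma~\ref{nontriang}, then for the triangular case argues by contradiction: if the bound fails, the cut lemma forces $\spo(T)=\spo(T-R-v)+\spo(K_{1,r})+1$, so Lister's first move must restrict to optimal moves on both pieces, and the detailed optimal-play analysis of Theorem~\ref{star} is invoked to constrain the form of $M\cap(R\cup\{v\})$ before a three-case Painter response is given. Your proof bypasses all of this: you never split on whether $r+1$ is triangular, never invoke Lemma~\ref{nontriang} or Theorem~\ref{star}, and your dichotomy (``does some minimizing response in $G$ to $M'$ omit $v$?'') absorbs the paper's separate case $v\notin M$ automatically. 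The generalization to arbitrary graphs $G$ (the stem hypothesis being irrelevant for the upper bound, as you note) is a real bonus. The paper's approach buys a tighter integration with its surrounding narrative, since Lemma~\ref{nontriang} and Theorem~\ref{star} are needed elsewhere anyway; your approach buys a cleaner, self-contained, and more general statement. One tiny quibble: your stated base case ``the empty graph'' is not quite right, since the claim requires $v\in V(G)$; the true base is $\C{V(G)}=1$ and $r=0$, which is trivial.
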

\begin{proof}
We use induction on $\C{V(T)}$.
When $v$ is a central vertex of a star component with at least two vertices,
the bound holds with equality, by Theorem~\ref{star}, so the claim holds for
all forests with such components.  Hence we may assume that $v$ has a non-leaf
neighbor and that the inequality holds for stems in all trees with fewer
vertices.

By Lemma~\ref{cutlemma}, $\spo(T-R-v)+r+1+u_r\le\spo(T)\le \spo(T-R-v)+r+u_r+2$.
Lemma~\ref{nontriang} yields $\spo(T)=\spo(T-R-v)+r+1+u_r$ when $r+1$ is not
triangular.  By Lemma~\ref{cutlemma}, $\spo(T-R)\ge\spo(T-R-v)+1$,
and hence $\spo(T)\le \spo(T-R)+r+u_r$ when $\spo(T)=\spo(T-R-v)+r+1+u_r$, such
as when $r+1$ is not triangular.  When $r+1$ is triangular, we give a strategy
for Painter, using the induction hypothesis.

If the desired inequality does not hold, then 
$\spo(T)=\spo(T-R-v)+\spo(K_{1,r})+1$.  Let $M$ be an optimal initial set
marked by Lister; the restrictions of $M$ to $T-R-v$ and $T[R\cup\{v\}]$ must
both be optimal first moves.  Let $p = \sizeof{M \cap R}$, and let $T' = T-R$.

\smallskip
\caze{1}{$v\notin M$.}
Since $r+1=t_{u_r+1}$, we have $r-u_r=t_{u_r}$.  We saw in Theorem~\ref{star}
that marking $p$ leaves and not the center is optimal for Lister on $K_{1,r}$
only when $p\le r-t_{u_r}=u_r$.  Let $X'$ be an optimal reply of Painter to the
move $M-R$ on the subtree $T-R-v$, and let $X = X' \cup (R \cap M)$.  Painter
colors $X$.

In the remaining forest $T-X$, still $v$ is a stem and $v$ has $r-p$ leaf
neighbors.  Since $p\le u_r$, the value $r-p+1$ is not triangular, and
$u_{r-p}=u_r$.  Hence Lemma~\ref{nontriang} applies, and
$\spo(T-X) = \spo(T-X-R-v) + (r-p) + u_{r-p} + 1$. Thus, the optimal total
score is $p+\sizeof{M - R} + \spo(T-X)$, which
equals $\spo(T-X-R-v) + r + u_{r-p} + 1 + \sizeof{M-R}$.  Since $X'$ is an
optimal response to the marked set $M-R$ on $T-R-v$ (and $T-X-R-v=T-X'-R-v$),
we have $\spo(T-X-R-v) + \sizeof{M-R} \leq \spo(T-R-v)$. Since
$\spo(K_{1,r}) = r + u_{r} + 1$ and $u_r = u_{r-p}$, the cost is at most
$\spo(T-R-v) + \spo(K_{1,r})$, which again is at most $\spo(T-R)+r+u_r$.

\smallskip \caze{2}{$v\in M$ and $p > u_r$.}  Let
$M' = M-(R\cup\{v\})$, let $X'$ be an optimal response when Lister
marks $M'$ in $T'$, and let $X = X'\cup (M\cap R)$.  Painter colors
$X$.  The final cost is at most $\sizeof{M}+\spo(T-X)$.  We claim
\[ \spo(T-X) \leq \spo(T'-X) + (r-p) + u_{r-p}. \]
If $p<r$, then $v$ is a stem in $T-X$ and this follows from the induction
hypothesis.  If $p=r$, then $v$ has no leaf neighbors in $T-X$ and is not a
stem, but then $T-X = T'-X$ and $(r-p) + u_{r-p} = 0$.

Also $\sizeof{M} = \sizeof{M'}+p+1$, and $T'-X=T'-X'$, and
$u_{r-p}\le u_r-1$ (since $p>u_r$), so
\begin{align*}
\spo(T)~\le~\sizeof{M}+\spo(T-X)
&~\leq~ \sizeof{M'}+p+1+\spo(T'-X)+(r-p)+u_{r-p} \\
&~\leq~ \sizeof{M'}+\spo(T'-X')+r+u_r ~\leq~ \spo(T')+r+u_r.
\end{align*}

\caze{3}{$v\in M$ and $p \leq u_r$.}
Let $M' = M-R$, and let $X'$ be an optimal response when Lister marks $M'$ in
$T'$.  (If $M'=\nul$, then $X'=\nul$.)  If $v \in X'$, then Painter colors $X'$.
Now $R$ consists of isolated vertices, so $\spo(T)\le\C{M}+\spo(T'-X')+r$.
We compute
\[
\sizeof{M}+\spo(T'-X')+r = p+\sizeof{M'}+\spo(T'-X')+r
\leq \spo(T')+p+r \leq \spo(T')+r+u_r.
\]
If $v \notin X'$, then Painter lets $X=X'\cup (M\cap R)$ and colors $X$.  The
final score is at most $\sizeof{M}+\spo(T-X)$.  Since $v$ remains a stem
in $T-X$, the induction hypothesis yields
$\spo(T-X)\le\spo(T'-X)+(r-p)+u_{r-p}$.  Again $T'-X = T'-X'$, so
\begin{align*}
\sizeof{M}+\spo(T-X) &~\leq~ p+\sizeof{M'}+\spo(T'-X')+(r-p)+u_{r-p} \\
&~\leq~ \spo(T')+r+u_{r-p} ~\leq~ \spo(T') + r + u_r. \qedhere
\end{align*}  
\end{proof}

It remains only to prove equality in Lemma~\ref{lem:penupper}
when $r+1$ is triangular.

\begin{lemma}\label{penlower}
If $r+1$ is triangular, then $\spo(T)\ge \spo(T-R)+r+u_r$.
\end{lemma}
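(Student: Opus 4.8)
The plan is to prove the matching lower bound by giving Lister a strategy, arguing by induction on $\sizeof{V(T)}$. Since $r+1$ is triangular, writing $k=u_r$ one has $r=t_k+k$, and Lemma~\ref{uplus} gives $u_{t_k-1}=k-1$; these two identities drive all the arithmetic. The only facts I need about the rest of the tree are the cut bounds of Lemma~\ref{lem:cutlemma}: the upper bound $\spo(T-R)\le\spo(T-R-v)+2$ (there is at most one edge from $v$ into $T-R-v$), and the monotonicity $\spo(H-x)\le\spo(H)-1$ (take $A=\{x\}$ in the lower bound). The latter, with Observation~\ref{simple}, lets me assume Painter always colors a maximal independent subset of whatever Lister marks, so that Lister's guaranteed value after a first move $M$ is $\sizeof{M}+\min_I \spo(T-I)$ over the maximal independent $I\subseteq M$.

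First I would record why the obvious move fails, to motivate the right one. If Lister marks $v$ together with $u_r$ leaves, then (as in Theorem~\ref{star}) Painter wants to color $v$, and a short computation shows this guarantees only $\spo(T-R-v)+r+u_r+1$. That meets the target $\spo(T-R)+r+u_r$ exactly when $\spo(T-R)=\spo(T-R-v)+1$, but falls one short in the ``heavy'' case $\spo(T-R)=\spo(T-R-v)+2$. The fix is to mark one extra leaf. So Lister's actual first move is $M=\{v\}\cup R_1$ with $R_1\subseteq R$ and $\sizeof{R_1}=u_r+1$ (legal since $r=t_{u_r}+u_r\ge u_r+1$). The graph $G[M]$ is a star centered at $v$, so Painter's effective choices are to color $\{v\}$ or to color $R_1$, and $\sizeof{M}=u_r+2$.

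Then I would treat the two branches. If Painter colors $v$, the leaves become isolated, so $T-v=(T-R-v)\sqcup\overline{K}_r$ and the continuation value is $\spo(T-R-v)+r$; combining with $\sizeof{M}=u_r+2$ and $\spo(T-R-v)\ge\spo(T-R)-2$ yields exactly $\spo(T-R)+r+u_r$, the heavy case now paying the extra point through the $+2$. If instead Painter colors $R_1$, then $v$ is still a stem of $T-R_1$, now with $t_{u_r}-1$ remaining leaves, and $(t_{u_r}-1)+1=t_{u_r}$ is again triangular; applying the induction hypothesis to $T-R_1$ (in which removing the remaining leaf neighbors of $v$ again yields $T-R$), together with $u_{t_{u_r}-1}=u_r-1$, gives the target after a one-line computation. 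The small case $u_r=1$, i.e.\ $r=2$, is degenerate: then $R_1=R$, the second branch leaves $T-R$ directly, and this serves as the base of the induction.

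The main obstacle is precisely the choice of first move: the naive ``force the center'' move (marking $u_r$ leaves) is off by one in the heavy case, and the key idea is that marking $u_r+1$ leaves makes \emph{both} of Painter's replies hit the target — coloring the center exposes the full cut value $+2$, while coloring the leaves reduces to a strictly smaller instance of the same triangular type. The one nonroutine verification is that this second reply really lands on a triangular instance so that the induction closes, which is exactly the computation $u_{t_{u_r}-1}=u_r-1$ furnished by Lemma~\ref{uplus}.
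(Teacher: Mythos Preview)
Your proposal is correct and follows essentially the same approach as the paper: Lister marks $v$ together with $u_r+1$ leaves, and the two Painter responses (color $v$, or color the marked leaves) are handled respectively via the cut bound $\spo(T-R)\le\spo(T-R-v)+2$ and via the induction hypothesis on the smaller triangular instance with $r'=t_{u_r}-1$. The added motivation explaining why $u_r$ leaves would fall one short is a nice touch not present in the paper, but the core argument is identical.
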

\begin{proof}
Again we use induction on $\sizeof{V(T)}$.  Again the claim holds when $v$ is
the center of a star component, so we may assume that $v$ has a non-leaf
neighbor.  We give a strategy for Lister on the first move and obtain the
desired lower bound for any Painter response.

Since $r+1$ is triangular and $r>0$, we have $r\ge u_r+1$.  Let Lister's 
initial marked set $M$ consist of $v$ and $u_r+1$ vertices from $R$.
By Observation~\ref{simple}, Painter responds by coloring $v$ or $M\cap R$.
Lister plays optimally on the remaining graph.

\caze{1}{Painter colors $v$.}
Let $T'=T-R$; the final score is at least $\sizeof{M} + r + \spo(T'-v)$. 
Since $v$ is a leaf in $T'$, Lemma~\ref{cutlemma} yields
$\spo(T')\le\spo(T'-v)+2$.  Also $\sizeof{M} = u_r+2$, so
\[
\spo(T)\ge \sizeof{M}+r+\spo(T'-v)
= u_r+2+r+\spo(T'-v) \geq u_r + r + \spo(T'),
\]

\caze{2}{Painter colors $M \cap R$.}  Let $T'=T-(M\cap R)$ and $r'=r-(u_r+1)$.
Since $r+1$ is triangular, also $r'+1$ is triangular, and $u_{r'}=u_r-1$.

First suppose $r>2$, so $r'>0$.  Now $v$ is a stem in $T'$, with $r'$
leaf vertices, and $T'-R=T-R$.  The final score is at least
$\sizeof{M}+\spo(T')$.  By the induction hypothesis,
\begin{align*}
\spo(T)&\ge\sizeof{M}+\spo(T') = (u_r + 2) + (u_{r'} + r') + \spo(T'-R) \\
    &= (u_r+2) + (u_r - 1) + (r - u_r - 1) + \spo(T'-R)
    = r + u_r + \spo(T-R).
  \end{align*}
When $r=2$, we have $u_r=1$ and $M=R\cup\{v\}$.  Thus $T'=T-R$.  We compute
\[
\spo(T)\ge\sizeof{M} + \spo(T') = 3 + \spo(T') = r + u_r + \spo(T-R).
\qedhere
\]
\end{proof}

Together, Lemmas~\ref{nontriang}--\ref{penlower} complete the proof of
Theorem~\ref{thm:main}.

\section{Extremal Forests}
Theorem~\ref{thm:main} makes it easy to characterize the $n$-vertex trees whose
sum-color cost equals the upper bound $\FL{3n/2}$ proved in~\cite{MPW}.  The
characterization includes an alternative proof of that bound.  We will do the
same for the trees achieving the lower bound $n+u_{n-1}$.

The characterization of the upper bound when $n$ is even is simple and 
elegant, but for odd $n$ some annoying flexibility creeps in. 

\begin{theorem}\label{charmax}
If $T$ is an $n$-vertex forest, then $\spo(T)\le \FL{3n/2}$.  Furthermore,
equality holds if and only if $T$ contains a spanning forest in which every
vertex has degree $1$ or $3$, except that when $n$ is odd there is also one
vertex with degree $0$ or $6$.
\end{theorem}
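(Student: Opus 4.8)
The plan is to pass to the ``excess'' $g(T)=\spo(T)-|V(T)|$, so that the bound $\spo(T)\le\FL{3n/2}$ reads $g(T)\le\FL{n/2}$ and the extremal condition reads $g(T)=\FL{n/2}$. Peeling a stem $v$ with $r$ leaves $R$ using Theorem~\ref{thm:main} gives, in both the triangular and the non-triangular case, the uniform recursion $g(T)=g(T')+u_r$, where $T'$ is $T-R-v$ or $T-R$. Before anything else I reduce to connected $T$: applying both bounds of Lemma~\ref{cutlemma} to the components shows that $\spo$ is additive over components, and since $\sum_C\FL{3|C|/2}$ equals $\FL{3n/2}$ exactly when at most one component has odd order, a forest is extremal if and only if each component is extremal and at most one component is odd. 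This turns the single permitted vertex of degree $0$ or $6$ into the statement ``the unique odd component (if any) carries the one exception,'' so it suffices to treat connected trees.

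I would prove the bound by induction on $n$, with the edgeless graph as base. The step needs $u_r\le\FL{n/2}-\FL{n'/2}$, where $n-n'\in\{r,r+1\}$. Writing $k=u_r$, so $t_k\le r$, the elementary inequality $2u_r\le r+1$ (equivalent to $(k-1)(k-2)\ge0$), sharpened to $2u_r\le r$ when $r+1$ is triangular, together with a one-line parity evaluation of $\FL{n/2}-\FL{n'/2}$, settles every case. The same computation is the engine of the whole theorem: if $g(T)=\FL{n/2}$, then $g(T')=\FL{n/2}-u_r$ combined with the just-proved bound $g(T')\le\FL{n'/2}$ forces both $g(T')=\FL{n'/2}$ (so $T'$ is again extremal) and $u_r=\FL{n/2}-\FL{n'/2}$. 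Solving the latter equation pins $r$ to a short list: for even $n$ only $r\in\{1,3\}$ (non-triangular) or $r=2$ (triangular); for odd $n$ additionally $r\in\{4,6\}$ (non-triangular) or $r=5$ (triangular).

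For sufficiency, suppose $F\subseteq T$ is spanning with all degrees in $\{1,3\}$, plus one vertex of degree $0$ or $6$ when $n$ is odd. Since $F$ has fewer edges, every independent set of $T$ is independent in $F$, so Painter's optimal strategy on $T$ is legal on $F$ and $\spo(F)\le\spo(T)$. A degree count shows that a tree with all degrees in $\{1,3\}$ has even order while a single vertex or a tree with one degree-$6$ vertex has odd order, so $F$ has at most one odd component; repeatedly peeling a \emph{leaf-cherry} (a degree-$3$ vertex with two leaf neighbors, present unless the component is a star) via the triangular case $r=2$ reduces each component to one of $K_2$, $K_{1,3}$, $K_{1,6}$, or $K_1$ and yields $\spo(C)=\FL{3|C|/2}$. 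Summing over components gives $\spo(F)=\FL{3n/2}$, and the upper bound forces $\spo(T)=\FL{3n/2}$.

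For necessity I induct on connected extremal trees. The base cases are the stars, which are extremal exactly for $K_{1,m}$ with $m\in\{0,1,2,3,4,6\}$ (solving $u_m=\FL{(m+1)/2}$), each carrying an obvious spanning forest. For a non-star extremal tree I peel a stem $v$, now with a unique non-leaf neighbor; then $T'$ is connected and extremal, and I extend its spanning forest $F'$ according to the allowed value of $r$. For $r=1$ or $r=3$ I adjoin a disjoint $K_2$ or $K_{1,3}$ on $\{v\}\cup R$; for $r=2$ I re-attach both leaves to $v$; and for odd $n$ the values $r=4,5,6$ are absorbed by isolating one leaf ($r=4$) or by raising $v$ to degree $6$ ($r=5,6$). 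The main obstacle is the bookkeeping here: the re-attachment needs the degree of $v$ in $F'$, and for odd $n'$ it needs to know whether $v$ is the exceptional vertex of $F'$. This is exactly what the parity reduction controls: when $n'$ is even there is no exception, so a vertex that has become a leaf of $T'$ has degree $1$ in $F'$, which is what lets me raise it to degree $3$ (for $r=2$) or degree $6$ (for $r=5$); when $n'$ is odd, $v$ is either a non-exceptional leaf of $F'$ (re-attach both leaves, raising it to degree $3$) or is itself the degree-$0$ exception (re-attach one leaf and move the exception to the other), and in every subcase $F$ retains a single exceptional vertex. Checking that these rules cover each allowed $r$ in each parity, and that the list of allowed $r$ from the second paragraph is complete, finishes the induction.
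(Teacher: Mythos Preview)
Your proof is correct and shares the paper's inductive skeleton (peel a stem, apply Theorem~\ref{thm:main}, and pin down the short list of admissible $r$), but you organize it differently in two places worth noting. First, passing to the excess $g(T)=\spo(T)-|V(T)|$ collapses the two cases of Theorem~\ref{thm:main} into the single recursion $g(T)=g(T')+u_r$, which replaces the paper's parallel estimates $r+1+u_r\le\lfloor 3(r+1)/2\rfloor$ and $r+u_r\le\lfloor 3r/2\rfloor$ by one parity computation; combined with your upfront reduction to connected $T$, this streamlines both the upper bound and the necessity case analysis (your treatment of whether $v$ is the exceptional vertex of $F'$ when $n'$ is odd is the same content as the paper's Figure~\ref{fig:cases}(b)--(c), just packaged differently). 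Second, and more substantively, your sufficiency argument diverges from the paper's: the paper peels a stem of $T$ and tracks how the witness $W$ behaves near it (splitting on $d_W(v)$ and on whether a vertex of $R$ is isolated in $W$), whereas you invoke the monotonicity $\spo(F)\le\spo(T)$ for a spanning subgraph and compute $\spo(F)$ directly by peeling leaf-cherries of $F$ down to $K_1$, $K_2$, $K_{1,3}$, or $K_{1,6}$. This is shorter and cleaner because the degrees in $F$ are controlled by hypothesis; the only costs are that monotonicity, while immediate from Proposition~\ref{pr:recur}, is not stated in the paper and should be justified, and you should say explicitly that a non-star component of $F$ always has a degree-$3$ stem with exactly two leaf neighbors (take the penultimate vertex on a longest path at the end away from the degree-$6$ vertex, if any).
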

\begin{proof}
Say that a forest is {\it tight} if it contains a spanning forest as described
in the statement.  Call such a spanning forest a {\it witness}.  Note that
a witness has a vertex of even degree if and only if $n$ is odd.  Say that
a tree is even when the number of vertices is even; otherwise it is odd.

We use induction on $n$.  The claims hold by inspection when $T$ has no edges,
including when $T$ has no vertices.  Hence we may assume that $T$ has a stem
$v$ (possibly the center of a star component).  Let $R$ be the set of leaf
neighbors of $v$, with $r=\sizeof{R}$, and let $T'=T-R-v$.

We first prove the upper bound, using Theorem~\ref{thm:main}.  Note that
$r+1+u_r\le \FL{3(r+1)/2}$, with equality only for $r\in\{1,2,3,4,6\}$.
If $r+1$ is not triangular, then the induction hypothesis yields
$\spo(T)=\spo(T')+r+1+u_r\le \FL{3(n-r-1)/2}+\FL{3(r+1)/2}\le \FL{3n/2}$.
If $r+1$ is triangular, then $r+u_r\le\FL{3r/2}$, with equality (among
triangular $r+1$) only for $r\in\{2,5\}$.  Hence when $r+1$ is triangular,
the induction hypothesis yields
$\spo(T)=\spo(T-R)+r+u_r\le \FL{3(n-r)/2}+\FL{3r/2}\le \FL{3n/2}$.

This completes the proof of the upper bound.  It remains to prove the 
characterization of equality (again by induction, with the basis when
$T$ has no edges).  Some cases in the proof are shown in Figure~\ref{fig:cases}.
The upper bound implies that when proving sufficiency, we
only need to prove $\spo(T)\ge\FL{3n/2}$ when $T$ is tight.

\begin{figure}[hbt]
  \centering
  \begin{tikzpicture}
    \begin{scope}[xshift=-4cm]
      \apoint{v} (v) at (0cm, 0cm) {};
      \apoint{} (w) at (-1cm, 0cm) {};
      \apoint{} (z1) at (1cm, 1cm) {};
      \apoint{} (z2) at (1cm, .33cm) {};
      \apoint{} (z3) at (1cm, -.33cm) {};
      \apoint{} (z4) at (1cm, -1cm) {};      
      \draw (w) -- (v);
      \foreach \i in {1,...,3} { \draw[forest edge] (v) -- (z\i); }
      \draw (v) -- (z4);
      \draw (-1.5cm, 0cm) circle (1cm);
      \node at (-1.75cm, 0cm) {$T'$};
      \node at (-.75cm, -2cm) {(a)};
    \end{scope}
    \begin{scope}[xshift=1cm]
      \apoint{v} (v) at (0cm, 0cm) {};
      \apoint{} (w) at (-.8cm, 0cm) {};
      \apoint{} (z1) at (1cm, .6cm) {};
      \apoint{} (z2) at (1cm, -.6cm) {};
      \draw[forest edge] (w) -- (v);
      \draw[forest edge] (v) -- (z1);
      \draw[forest edge] (v) -- (z2);
      \draw (-.7cm, 0cm) circle (1cm);
      \node at (-.7cm, -.5cm) {$T-R$};
      \node at (-.75cm, -2cm) {(b)};      
    \end{scope}
    \begin{scope}[xshift=6cm]
      \apoint{v} (v) at (0cm, 0cm) {};
      \apoint{} (w) at (-.8cm, 0cm) {};
      \apoint{} (z1) at (1cm, .6cm) {};
      \apoint{} (z2) at (1cm, -.6cm) {};
      \draw (w) -- (v);
      \draw[forest edge] (v) -- (z1);
      \draw (v) -- (z2);
      \draw (-.7cm, 0cm) circle (1cm);
      \node at (-.7cm, -.5cm) {$T-R$};
      \node at (-.75cm, -2cm) {(c)};      
    \end{scope}
  \end{tikzpicture}
  \caption{Selected cases for Theorem~\ref{charmax}.}
  \label{fig:cases}  
\end{figure}
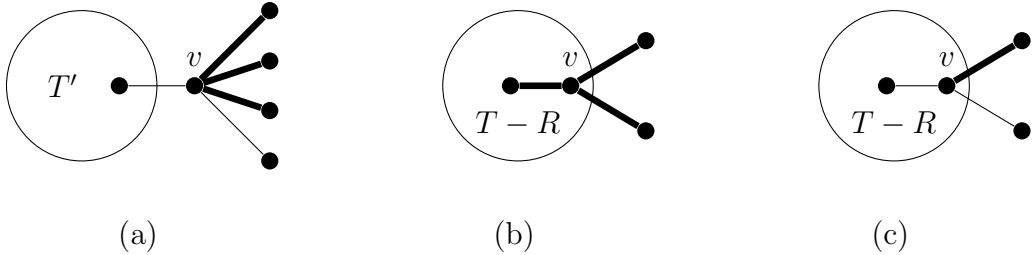

\medskip
{\bf Necessity.}
We assume $\spo(T)=\FL{3n/2}$.  The upper bound computation yields $r\le6$.

If $r+1$ is not triangular, then $r\in\{1,3,4,6\}$.  Equality
in the bound requires $\spo(T')=\FL{3\C{V(T')}/2}$, so by the
induction hypothesis $T'$ is tight.  Thus $T'$ has a witness.  If
$r\in\{1,3\}$, then adding the edges from $v$ to $R$ yields a
witness for $T$.  If $r\in\{4,6\}$, then $\FL{3(r+1)/2}<3(r+1)/2$, so
equality in $\spo(T)\le\FL{3n/2}$ requires $T'$ to be even.  Hence a
witness for $T'$ has no vertex of even degree, and we obtain a witness
for $T$ by adding three edges from $v$ to $R$ if $r=4$ and adding all
six edges if $r=6$. Figure~\ref{fig:cases}(a) shows the case $r=4$.

Now suppose $r+1$ is triangular, so $r\in\{2,5\}$.
Equality requires $\spo(T-R)=\FL{3\C{V(T')}/2}$, and hence $T-R$ is tight.
When $r=5$, we have $\FL{3r/2}=(3r-1)/2$, so $\spo(T)=\FL{3n/2}$ requires $T-R$
to be even.  Hence $v$ has degree $1$ in a witness for $T-R$, and adding all
the edges from $v$ to $R$ completes a witness for $T$, with $v$ of degree $6$.

When $r=2$, again $T-R$ must be tight.  If a witness has degree $1$ at
$v$, as in Figure~\ref{fig:cases}(b), then add both edges from $v$ to $R$
to make a witness for $T$.  If instead $v$ is isolated, as in
Figure~\ref{fig:cases}(c), then add one edge from $v$ to $R$ and leave
the other vertex of $R$ isolated.

\medskip
{\bf Sufficiency.}
We assume that $T$ is tight.

If a witness $W$ for $T$ has some vertex $z\in R$ as an isolated vertex, then
$W-z$ is a witness for $T-z$, and $T-z$ is even.  By the induction hypothesis,
$\spo(T-z)=3(n-1)/2$.  Since Lister can play $T-z$ and $z$ separately,
$\spo(T)\ge\FL{3n/2}$, and hence equality holds.

Hence we may assume that all vertices of $R$ have degree $1$ in $W$.
This requires $d_W(v)\in\{1,3,6\}$.  If $d_W(v)=r$, then $r+1$ is not
triangular.  Also $T'$ is tight (and is even if $d_W(v)=6$).  By the induction
hypothesis, equality holds for $T'$ and in the computation for $T$.

Hence we may assume that $v$ has a non-leaf neighbor in $W$, so
$r=d_W(v)-1$, which eliminates the case $r=1$.  Note that $r+1\in\{3,6\}$,
so $r+1$ is triangular, and $\spo(T)=\spo(T-R)+\FL{3r/2}$.
For $r=2$, we have $d_W(v)=3$.  Hence $W-R$ is a witness for $T-R$.
When $r=5$ and $d_W(v)=6$, tightness of $T$ requires $T$ to be odd.
Deleting the five vertices of $R$ from $W$ leaves a witness for $T-R$.
In both cases, the induction hypothesis gives equality hold in the bound for
$T-R$ and hence in the bound for $T$.
\end{proof}

Our final task is to characterize the $n$-vertex trees having the least cost.
When neither $n-1$ nor $n-2$ is triangular, the star $K_{1,n-1}$ is the unique
minimizing tree, but in the remaining cases a few other trees may also have
this cost.  For example, from Lemma~\ref{nontriang} and the formula 
$\spo(K_{1,r})=r+1+u_r$, it is easy to check that when $n-1$ or $n-2$ is
triangular the tree obtained by subdividing one edge of $K_{1,n-2}$ has
the same cost as $K_{1,n-1}$.

Our proof of the characterization provides an alternative proof of
the result in~\cite{MPW} that the minimum is $n+u_{n-1}$.
We begin with a numerical lemma, which suggests that the tree obtained by
subdividing one edge of $K_{1,n-2}$ is the best candidate to match $K_{1,n-1}$.




\begin{lemma}\label{concave}
If $m,r\in\NN$ satisfy $2<r\le m/2$, then $u_r+u_{m-r}\ge u_1+u_{m-1}$.  If
also no number in $\{m-4,m-3,m-2,m-1\}$ is triangular, then
$u_r + u_{m-r} \geq 1 + u_1 + u_{m-1}$.
\end{lemma}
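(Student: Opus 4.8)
The plan is to read $u_x$ as the number of positive triangular numbers that are at most $x$, so that for $a<b$ the difference $u_b-u_a$ counts the triangular numbers $t_k$ with $a<t_k\le b$. Since $u_1=1$, the first claim is equivalent to $u_r-1\ge u_{m-1}-u_{m-r}$ and the second to $u_r-1\ge 1+(u_{m-1}-u_{m-r})$. I would set $A=u_{m-r}$ and $q=u_{m-1}-u_{m-r}$; then the triangular numbers in the ``upper'' range $(m-r,\,m-1]$ are exactly $t_{A+1},\dots,t_{A+q}$, while $u_r-1$ counts the triangular numbers in the ``lower'' range $(1,r]$. Thus the two claims reduce to showing that the lower range contains at least $q$ (respectively $q+1$) triangular numbers, i.e.\ $t_{q+1}\le r$ (respectively $t_{q+2}\le r$).

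First I would extract a lower bound on $r$ from the definitions of $A$ and $q$. From $t_{A+1}>m-r$ and $t_{A+q}\le m-1$ we get $r>m-t_{A+1}\ge t_{A+q}-t_{A+1}+1$, hence $r\ge t_{A+q}-t_{A+1}+2$. It then remains to convert this into $t_{q+1}\le r$, for which it suffices to verify the elementary identity $t_{A+q}-t_{A+1}-t_{q+1}=(A-1)(q-1)-3$; indeed this gives $t_{q+1}\le r$ as soon as $(A-1)(q-1)\ge 1$. Here I would use that $r\le m/2$ forces $m-r\ge r\ge 3$, so $A=u_{m-r}\ge u_3=2$, and I would dispose of the degenerate cases $q\in\{0,1\}$ directly: for $q\le 1$ the claim is immediate since $r\ge 3$ already places $t_2=3$ in the lower range.

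For the second claim the extra hypothesis does the work. Because none of $m-4,\dots,m-1$ is triangular and $t_{A+q}$ is the largest triangular number at most $m-1$, we must have $t_{A+q}\le m-5$, which sharpens the lower bound to $r\ge t_{A+q}-t_{A+1}+6$. The companion identity $t_{A+q}-t_{A+1}-t_{q+2}=(A-2)(q-1)-6$ then yields $t_{q+2}\le r$ whenever $(A-2)(q-1)\ge 0$, and this holds because $A\ge 2$ and $q\ge 1$ (the case $q=0$ being handled separately, where $u_r\ge 2$ already suffices).

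The hard part is resisting the tempting but false intuition that ``triangular numbers are denser near $0$'' in the sense of equal-length windows: a window shifted slightly to the right can contain strictly more triangular numbers, so there is no clean window-monotonicity to invoke. What makes the argument go through is that the hypothesis $r\le m/2$ pushes the upper window far out (yielding $A\ge 2$), and that the two short identities above deliver exactly the constants $2$ and $6$ that are needed. The delicate points are therefore the tightness of these constants and the boundary cases $q\in\{0,1\}$, where the product $(A-1)(q-1)$ or $(A-2)(q-1)$ degenerates and must be checked by hand.
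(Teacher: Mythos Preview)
Your argument is correct and takes a genuinely different route from the paper's. The paper proceeds by descent: writing $a_r=u_r+u_{m-r}$, it shows for each $r$ with $2<r\le m/2$ that some $q<r$ (with $q\neq 2$) satisfies $a_q\le a_r$, handling $r\in\{3,4,5\}$ by a direct gap estimate and for $r\ge 6$ comparing $a_r$ with $a_{r-1}$, or jumping down to $q=\binom{u_r+1}{2}-1$ when $m-r+1$ is triangular and $r$ is not. You instead argue directly: reinterpreting $u_r-1$ and $u_{m-1}-u_{m-r}$ as counts of triangular numbers in $(1,r]$ and $(m-r,\,m-1]$, you extract $r\ge t_{A+q}-t_{A+1}+2$ from the defining inequalities for $A=u_{m-r}$ and $q=u_{m-1}-u_{m-r}$, and then the exact identities
\[
t_{A+q}-t_{A+1}-t_{q+1}=(A-1)(q-1)-3,\qquad t_{A+q}-t_{A+1}-t_{q+2}=(A-2)(q-1)-6
\]
(both of which check out) reduce everything to $(A-1)(q-1)\ge 1$ and $(A-2)(q-1)\ge 0$, available since $A=u_{m-r}\ge u_3=2$. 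Your treatment of the boundary cases $q\in\{0,1\}$ is fine; for $q=0$ the needed inequality is vacuous in the first claim and amounts to $u_r\ge 2$ in the second, both immediate from $r\ge 3$. Your approach is shorter and avoids the paper's case split on whether $r$ or $m-r+1$ is triangular; the paper's descent, on the other hand, makes the near-monotonicity of $r\mapsto a_r$ explicit (it drops by at most $1$ at isolated values of $r$), which is a bit more informative if one later wants to locate the equality cases.
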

\begin{proof}
Let $a_r=u_r+u_{m-r}$.  To prove the first statement, it suffices to
show for $2<r\le m/2$ that there exists $q$ with $1\le q<r$ and
$q\ne2$ such that $a_q\le a_r$.

First consider $r\in\{3,4,5\}$ and $m\ge 2r$.  We have $a_1=1+u_{m-1}$ and
$a_r=2+u_{m-r}$.  When $r\in\{3,4,5\}$ and $m\ge 2r$, there cannot be two
triangular numbers in the interval $[m-r+1,m-1]$, so
$u_{m-r}\ge u_{m-1}-1$.  Thus $a_1\le a_r$, so setting $q=1$ suffices.

For $r\ge6$, first compare $a_r$ with $a_{r-1}$:
\begin{equation*}
  a_{r}-a_{r-1}=\begin{cases}
    1& \text{ if $r$ is triangular and $m-r+1$ is not,}\\
    -1& \text{ if $m-r+1$ is triangular and $r$ is not,}\\
    0& \text{ otherwise.}
  \end{cases}
\end{equation*}
If $m-r+1$ is not triangular or $r$ is triangular, then setting $q=r-1$
suffices.  Otherwise, $m-r+1$ is triangular and $r$ is not.  Let
$q=\CH{u_r+1}2-1$; note that $u_q=u_r-1$.  Also, the difference between
$m-r+1$ and the next higher triangular number is greater than $r-q$; hence
$u_{m-q}=u_{m-r+1}=u_{m-r}+1$.  Thus $a_q=a_r$.  Also, since $r>6$, we have
$q\ge5$, so we eventually reduce to $r=5$.  This proves the first statement.

Now suppose that none of $\{m-4,m-3,m-2,m-1\}$ is triangular.  In this case, for
$r \in \{3,4,5\}$, there is no triangular number in the interval $[m-r+1,m-1]$;
hence $u_{m-r} = u_{m-1}$ and $a_r \geq a_1+1$.  For $r \geq 6$, the argument
above yields $q$ with $r>q\ge 5$ such that $a_q \leq a_r$, which now implies
the stronger inequality $a_r \geq a_1+1$.
\end{proof}

\begin{theorem}\label{min-nontri}
If $T$ is an $n$-vertex tree, then $\spo(T)\ge n+u_{n-1}$, with equality
for $T=K_{1,n-1}$.  Furthermore, $K_{1,n-1}$ is the unique minimizing tree
when neither $n-1$ nor $n-2$ is triangular.
\end{theorem}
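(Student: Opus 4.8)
The plan is to prove both halves of the statement by induction on $n$, peeling off the star at a stem via Theorem~\ref{thm:main} and feeding the resulting inequalities into Lemma~\ref{concave}. When $T$ is not a star I fix a stem $v$ lying on a longest path (its leaf neighbor exists, and its neighbor $w$ further along the path is a non-leaf), with leaf set $R$ of size $r$. A short check shows that $w$ has a neighbor $x\notin R\cup\{v\}$ (else a triangle would appear), so $T'=T-R-v$ is a \emph{tree} with $n'=n-r-1\ge 2$ vertices and $T-R$ is a tree with $n-r\ge 3$ vertices; this keeps us inside the class of trees when applying the induction hypothesis. The base case is $T$ a star, where Theorem~\ref{star} gives $\spo(K_{1,n-1})=(n-1)+1+u_{n-1}=n+u_{n-1}$, the desired equality.

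For the lower bound, Theorem~\ref{thm:main} gives $\spo(T)=\spo(T')+r+1+u_r$ when $r+1$ is not triangular (Case~1) and $\spo(T)=\spo(T-R)+r+u_r$ when $r+1$ is triangular (Case~2). Inserting the inductive bounds $\spo(T')\ge n'+u_{n'-1}$ and $\spo(T-R)\ge (n-r)+u_{n-r-1}$ and simplifying, the claim $\spo(T)\ge n+u_{n-1}$ reduces to
\[
u_r+u_{n-r-2}\ge u_{n-1}\quad(\text{Case 1}),\qquad u_r+u_{n-r-1}\ge u_{n-1}\quad(\text{Case 2}).
\]
Since $u_r+u_{m-r}$ is symmetric in $r\leftrightarrow m-r$, I apply the first part of Lemma~\ref{concave} (with $m=n-2$ in Case~1 and $m=n-1$ in Case~2) on the middle range $3\le r\le m/2$, obtaining $u_r+u_{m-r}\ge 1+u_{m-1}$, and then close out using the elementary fact that $u$ increases by at most $1$ between consecutive arguments. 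The boundary values $r\in\{1,2\}$ and their mirror images follow directly from the same ``increase by at most $1$'' fact. This reproves the bound $\spo(T)\ge n+u_{n-1}$ of~\cite{MPW}.

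For uniqueness I strengthen the argument to $\spo(T)\ge n+u_{n-1}+1$ whenever $T$ is not a star and neither $n-1$ nor $n-2$ is triangular. The key leverage is that this hypothesis forces $u_{n-3}=u_{n-2}=u_{n-1}$, which upgrades the estimate $u_r+u_{m-r}\ge 1+u_{m-1}$ from Lemma~\ref{concave} into $u_r+u_{m-r}\ge u_{n-1}+1$ (using $m=n-2$ in Case~1 and $m=n-1$ in Case~2). On the middle range $3\le r\le m/2$ this yields the strict bound at once, and the mirror range follows by symmetry. The boundary values are controlled by the case hypothesis itself: in Case~1 the value $r=2$ cannot occur (it would make $r+1=3$ triangular), the small value $r=1$ gives exactly $n+u_{n-1}+1$, and the mirror value $r=n-4$ requires $n-3$ to be non-triangular, which is precisely the Case~1 hypothesis; in Case~2 we have $r\ge2$, so $r=1$ is excluded, the mirror value $r=n-3$ would force $n-2$ to be triangular and is excluded by hypothesis, and $r=2$ again gives exactly $n+u_{n-1}+1$. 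Combined with $\spo(K_{1,n-1})=n+u_{n-1}$, this shows $K_{1,n-1}$ is the unique minimizer.

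The main obstacle is the bookkeeping at the boundary values of $r$, where Lemma~\ref{concave} does not apply: one must check that exactly those residual configurations which would break strictness are eliminated by the triangularity of $r+1$ (which is what decides between Case~1 and Case~2), and that the two ``tight'' boundary values $r=1$ in Case~1 and $r=2$ in Case~2 meet but do not beat $n+u_{n-1}+1$. This dovetails with the heuristic flagged by Lemma~\ref{concave}: the subdivided star (one edge of $K_{1,n-2}$ subdivided), which corresponds to the stem value $r=n-3$, is the best non-star competitor, matching the minimum exactly when $n-1$ or $n-2$ is triangular and exceeding it by exactly $1$ otherwise.
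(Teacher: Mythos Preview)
Your argument is correct and follows the same overall plan as the paper—induction on $n$, peel off the star at a stem via Theorem~\ref{thm:main}, and feed the result into Lemma~\ref{concave}—but the execution differs in two organizational choices that make your version more case-heavy.

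First, the paper exploits a freedom you do not: since a non-star tree has stems at both ends of a longest path, one can pick the end with fewer leaf neighbors and thereby assume $r\le(n-2)/2$ from the outset. This eliminates the ``mirror'' boundary cases $r\in\{m-2,m-1\}$ you handle separately. Second, the paper does \emph{not} split on whether $r+1$ is triangular. Instead it uses the single inequality $\spo(T)\ge\spo(T-R-v)+r+1+u_r$, valid in both cases of Theorem~\ref{thm:main} (the triangular case follows from $\spo(T-R)\ge\spo(T-R-v)+1$ via Lemma~\ref{cutlemma}), and then proves the unified claim $\spo(T)\ge n+1+u_{n-3}$. Only $r=2$ is singled out, because Lemma~\ref{concave} excludes it; for all other $r\le(n-2)/2$ the lemma (or the trivial case $r=1$) applies directly with $m=n-2$. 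From $\spo(T)\ge n+1+u_{n-3}$ the conclusion follows immediately: $u_{n-3}\ge u_{n-1}-1$ always, with equality exactly when $n-1$ or $n-2$ is triangular. Your approach buys nothing extra; the paper's buys a noticeably shorter argument with a single value of $m$ and no symmetry bookkeeping.
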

\begin{proof}
We proved $\spo(K_{1,n-1})=n+u_{n-1}$ in Theorem~\ref{star}.  To complete the
proof, we show that $\spo(T)\ge\spo(K_{1,n-1})$ for every non-star tree, with
strict inequality when neither $n-1$ nor $n-2$ is triangular.
We use induction on $n$.  Every tree with at most three vertices is a star,
so we may assume $n\ge4$.

Let $v$ be a stem in $T$, and let $R$ be the set of leaf neighbors of
$v$, with $r = \sizeof{R}$.  Since we may let $v$ be the penultimate vertex on
either end of a longest path, we may assume $r\le (n-2)/2$.  Let $m=n-2$.

We claim $\spo(T)\ge n+1+u_{n-3}$.  If so, then since $n\ge4$ implies
$u_{n-3}\ge u_{n-1}-1$, we have $\spo(T)\ge \spo(K_{1,n-1})$.
If also neither $n-1$ nor $n-2$ is triangular, then $u_{n-3}=u_{n-1}$, and
$\spo(T)>\spo(K_{1,n-1})$.  It remains to prove the claim.

If $r=2$, then since $r+1$ is triangular, Theorem~\ref{thm:main} and the
induction hypothesis yield
\[
\spo(T)=\spo(T-R)+r+u_r = \spo(T-R)+3 \geq (n-2+u_{n-3})+3 = n+u_{n-3}+1.
\]

If $r\ne2$, then we use Theorem~\ref{thm:main}, the lower bound in
Lemma~\ref{cutlemma} (yielding $\spo(T-R)\ge\spo(T-R-v)+1$), the induction
hypothesis, and Lemma~\ref{concave} for $r\ne2$ (with $m=n-2$) to compute
\begin{align*}
\spo(T)\ge\spo(T-R-v)+r+1+u_r&\ge
n-r-1+u_{n-r-2}+r+1+u_r\\
&= n+u_r+u_{n-r-2}\ge n+1+u_{n-3}.
\end{align*}
This completes the proof.
\end{proof}

The remaining case is somewhat technical; additional minimizing trees arise.
They are close to being stars.  For $a,b\ge1$, the {\it double-star} $S_{a,b}$
is the tree with $a+b+2$ vertices having two non-leaf vertices, one with $a$
leaf neighbors and the other with $b$ leaf neighbors (see
Figure~\ref{fig:lowertrees}).  The {\it subdivided double-star} $S'_{a,b}$ is
obtained from $S_{a,b}$ by subdividing the central edge to add one vertex.
Note that the tree obtained from $K_{1,n-2}$ by sudividing one edge is
$S_{1,n-3}$.

\begin{figure}[h]
  \centering
  \begin{tikzpicture}
    \begin{scope}[xshift=-5cm]
      \apoint{} (v) at (0cm, 0cm) {};
      \apoint{} (w1) at (-2cm, 1cm) {};
      \apoint{} (w2) at (-1cm, 1cm) {};
      \node at (0cm, 1cm) {$\cdots$};
      \apoint{} (w3) at (1cm, 1cm) {};
      \apoint{} (w4) at (2cm, 1cm) {};
      \rpoint{} (y) at (0cm, -1cm) {};
      \apoint{} (z1) at (-2cm, -2cm) {};      
      \apoint{} (z2) at (-1cm, -2cm) {};      
      \node at (0cm, -2cm) {$\cdots$};
      \apoint{} (z3) at (1cm, -2cm) {};      
      \apoint{} (z4) at (2cm, -2cm) {};      
      \draw [decorate, decoration={brace, amplitude=5pt}] (-2.2cm, 1.1cm) -- (2.2cm, 1.1cm) node[above, yshift=5pt, pos=.5] {$a$ leaves};
      \draw [decorate, decoration={brace, amplitude=5pt}] (2.2cm, -2.1cm) -- (-2.2cm, -2.1cm) node[below, yshift=-5pt, pos=.5] {$b$ leaves};
      \foreach \i in {1,...,4} { \draw (v) -- (w\i); }
      \foreach \i in {1,...,4} { \draw (y) -- (z\i); }
      \draw (v) -- (y);
      \node at (0cm, -3cm) {$S_{a,b}$};
    \end{scope}
    \begin{scope}[xshift=0cm]
      \apoint{} (v) at (0cm, 0cm) {};
      \apoint{} (w1) at (-2cm, 1cm) {};
      \apoint{} (w2) at (-1cm, 1cm) {};
      \node at (0cm, 1cm) {$\cdots$};
      \apoint{} (w3) at (1cm, 1cm) {};
      \apoint{} (w4) at (2cm, 1cm) {};
      \rpoint{} (y) at (0cm, -1cm) {};
      \rpoint{} (x) at (.5cm, -.5cm) {};
      \apoint{} (z1) at (-2cm, -2cm) {};      
      \apoint{} (z2) at (-1cm, -2cm) {};      
      \node at (0cm, -2cm) {$\cdots$};
      \apoint{} (z3) at (1cm, -2cm) {};      
      \apoint{} (z4) at (2cm, -2cm) {};      
      \draw [decorate, decoration={brace, amplitude=5pt}] (-2.2cm, 1.1cm) -- (2.2cm, 1.1cm) node[above, yshift=5pt, pos=.5] {$a$ leaves};
      \draw [decorate, decoration={brace, amplitude=5pt}] (2.2cm, -2.1cm) -- (-2.2cm, -2.1cm) node[below, yshift=-5pt, pos=.5] {$b$ leaves};
      \foreach \i in {1,...,4} { \draw (v) -- (w\i); }
      \foreach \i in {1,...,4} { \draw (y) -- (z\i); }
      \draw (v) -- (x) -- (y);
      \node at (0cm, -3cm) {$S'_{a,b}$};
    \end{scope}
  \end{tikzpicture}
  \caption{Non-star trees achieving the lower bound in Theorem~\ref{thm:classify}.}
  \label{fig:lowertrees}
\end{figure}
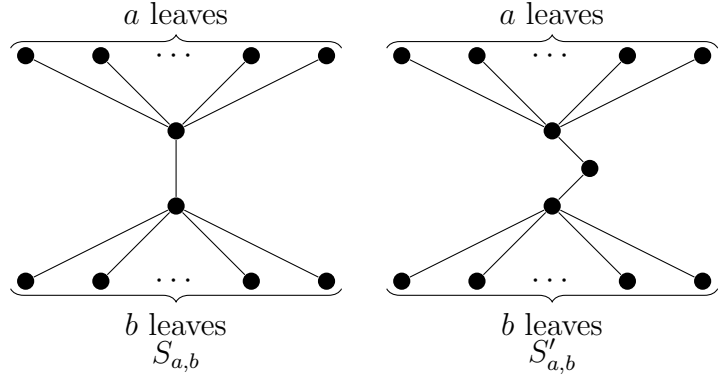

\begin{theorem}\label{thm:classify}
When $n-1$ or $n-2$ is triangular, the $n$-vertex trees $T$ minimizing
$\spo(T)$ are $\{K_{1,n-1},S_{1,n-3},S_{2,n-4},S'_{1,n-4}\}$, except that
for $n=7$ all trees are included, and $S_{4,5}$ and $S'_{4,4}$ are included
when $n=11$.
\end{theorem}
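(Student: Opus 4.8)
The plan is to prove both inclusions by induction on $n$, taking the bound $\spo(T)\ge n+u_{n-1}$ from Theorem~\ref{min-nontri} as given. Since exactly one of $n-1,n-2$ is triangular (for $n\ge3$), we have $u_{n-1}=u_{n-3}+1$, so $T$ is a minimizer precisely when every inequality in the chain used to prove Theorem~\ref{min-nontri} is an equality. I would treat $n\in\{3,4,5,7\}$ as base cases by direct inspection: for $n\in\{3,4,5\}$ the listed family already exhausts all trees once the degenerate members are identified (for instance $S_{1,1}=P_4$ and $S'_{1,1}=P_5$), while for $n=7$ one has $\FL{3n/2}=10=n+u_{n-1}$, so the extremal upper and lower bounds coincide and \emph{every} $7$-vertex tree is a minimizer, giving the stated exception.

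For sufficiency I would evaluate $\spo$ on each listed tree directly through Theorem~\ref{thm:main}, reducing at a low-degree stem until a star remains. The trees $S_{1,n-3}$ and $S'_{1,n-4}$ each reduce through an $r=1$ stem to $K_{1,n-3}$, contributing $1+1+u_1=3$; the tree $S_{2,n-4}$ reduces through an $r=2$ stem to $K_{1,n-3}$, contributing $2+u_2=3$; and for $n=11$ both $S_{4,5}$ and $S'_{4,4}$ reduce through an $r=4$ stem to $K_{1,5}$, contributing $4+1+u_4=7$. In each case the total equals $n+u_{n-1}$, so the tree is a minimizer.

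The substance is the necessity direction. Given a non-star minimizer $T$, I would take $v$ to be the penultimate vertex of a longest path, so $v$ is a stem with leaf set $R$, $r=\sizeof{R}\le(n-2)/2$, and a single non-leaf neighbor $w$. Forcing equality throughout the proof of Theorem~\ref{min-nontri} pins down three facts: (i) the smaller tree ($T-R-v$ when $r+1$ is not triangular, or $T-R$ when $r=2$) is itself a minimizer, so its structure is known by induction; (ii) equality holds in Lemma~\ref{concave}, that is $u_r+u_{n-r-2}=1+u_{n-3}$; and (iii) when $r+1$ is triangular, equality holds in the cut bound $\spo(T-R)=\spo(T-R-v)+1$. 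For $n\ge8$ the only coincidences making $n-3$ or $n-4$ triangular are the already-handled values $n\in\{3,4,5,7\}$, so the smaller minimizer is the \emph{unique} star $K_{1,n-r-2}$ (or $K_{1,n-3}$). When $r+1$ is not triangular the reduction deletes $v$, and reattaching it so that $w$ is the center or a leaf of that star produces $S_{r,n-r-2}$ or $S'_{r,n-r-3}$; when $r=2$ the reduction keeps $v$, producing only $S_{2,n-4}$. It therefore remains to determine which $r$ survive.

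The crux, and the step I expect to be the main obstacle, is the triangular-number bookkeeping that isolates the admissible $r\ge3$. The second part of Lemma~\ref{concave} shows that (ii) forces one of $\{n-6,n-5,n-4,n-3\}$ to be triangular; together with ``$n-1$ or $n-2$ triangular'' this places two triangular numbers at distance at most five, so the larger is at most $t_5=15$ and hence $n\le17$, reducing the whole question to a finite check. Running it, $r=1$ and $r=2$ always survive (yielding $S_{1,n-3},S'_{1,n-4},S_{2,n-4}$), $r=3$ survives only for the small base-case $n$, and $r=4$ survives exactly at $n=11$ (where $t_3=6=n-5$ sits in the window and, since $r+1=5$ is not triangular, condition (iii) is vacuous), giving $S_{4,5}$ and $S'_{4,4}$. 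The delicate point is $r=5$: equality (ii) genuinely holds at $n\in\{12,16\}$, but in each case the governing lower triangular number lies exactly at $n-6$, and a short computation then shows condition (iii) fails for both placements of $w$ (one giving $\spo(K_{1,n-6})$ and the other $\spo(S_{1,n-8})$, neither equal to $\spo(K_{1,n-7})+1$), so no minimizer arises. Finally the reduction inside the proof of Lemma~\ref{concave} pushes every $r\ge6$ back to the $r=5$ analysis, excluding those as well. Collecting the surviving cases yields exactly $\{K_{1,n-1},S_{1,n-3},S_{2,n-4},S'_{1,n-4}\}$, with $S_{4,5}$ and $S'_{4,4}$ added when $n=11$.
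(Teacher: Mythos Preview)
Your overall plan matches the paper's: handle small $n$ directly, then for $n\ge8$ pick a stem $v$ with $r\le(n-2)/2$, treat $r\in\{1,2\}$ by reducing to a minimizer on $n-2$ vertices (which is uniquely a star since neither $n-3$ nor $n-4$ is triangular), and for $r\ge3$ reduce to a finite check via Lemma~\ref{concave}. Your sufficiency computations and the $r\le4$ and $r=5$ analyses are correct; in particular your condition (iii) argument for $r=5$ is a valid alternative to the paper's direct bound $\spo(T)=\spo(T-R)+7\ge(n-5)+u_{n-6}+7$.

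There is, however, a genuine gap in your $r\ge6$ step. The reduction inside the proof of Lemma~\ref{concave} only yields $a_r\ge a_5$ (where $a_r=u_r+u_{n-2-r}$); it does \emph{not} show $a_r>a_1$ when $a_5=a_1$, which is precisely the situation at $n\in\{12,16\}$. So ``pushing back to the $r=5$ analysis'' does not, by itself, exclude the possibility that condition (ii) holds for some $r\ge6$; and even if (ii) held, your condition (iii) argument was carried out only for $r=5$, not for larger $r$. The fix is easy once you have bounded $n\le16$: the constraint $r\le(n-2)/2$ forces $r\le5$ for $n=12$, and for $n=16$ one checks directly that $a_6=u_6+u_8=6$ and $a_7=u_7+u_7=6$ both exceed $a_1=1+u_{13}=5$, so (ii) fails. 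This is exactly what the paper does (it simply lists $n\in\{8,11,12,16\}$ and computes $a_r$ for each admissible $r$). Replace the appeal to the reduction with this two-line check and your argument is complete.
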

\begin{proof}
For $n \in \{4,5,7\}$, the upper and lower bounds on $\spo(T)$ are equal, so
all $n$-vertex trees achieve the lower bound.  When $n\in\{4,5\}$, all 
$n$-vertex trees are in the listed set.  Thus, we may use induction on $n$
with basis $n\le 7$.  Suppose $n \geq 8$.

Let $T$ be an $n$-vertex tree achieving $\spo(T) = n+u_{n-1}$.  Let $v$ be a
stem having the fewest leaf neighbors, $R$ the set of leaves adjacent to
$v$, and $r = \sizeof{R}$.  We may assume that $T$ is not a star,
which implies $r\le \frac{n-2}{2}$.

Since $n-1$ or $n-2$ is triangular, $u_{n-3}=u_{n-1}-1$.  Furthermore,
since $n\ge8$, the numbers $n-3$ and $n-4$ are not triangular, which means
$u_{n-4}=u_{n-5}=u_{n-1}-1$.

If $r=1$, then $r+1$ is not triangular, and Theorem~\ref{thm:main} yields 
$\spo(T)=\spo(T-R-v)+3$.  Also $T-R-v$ has $n-2$ vertices, so
$\spo(T)\ge (n-2)+u_{n-3}+3 =n+u_{n-1}$.  Since neither $n-3$ nor $n-4$ is
triangular, by the induction hypothesis equality holds if and only if
$T-R-v=K_{1,n-3}$.  Depending on whether $v$ is adjacent to the center or a
leaf of $T-R-v$, we have equality if and only if $T$ is $S_{1,n-3}$ or
$S'_{1,n-4}$.

If $r=2$, then $r+1$ is triangular, and Theorem~\ref{thm:main} yields 
$\spo(T)=\spo(T-R)+3$.  Now $T-R$ has $n-2$ vertices, so the same argument
as above yields $T-R=K_{1,n-3}$ when equality holds.  Our vertex $v$ lies in
$K_{1,n-3}$, and $T$ is obtained by adding two pendant edges at $v$.  If $v$
is the center of $K_{1,n-3}$, then $T$ is a star; hence $v$ is a leaf of
$K_{1,n-3}$ and $T = S_{2,n-4}$.

Hence we may assume $r\ge3$.  Let $\epsilon=1$ if $\{n-3,n-4,n-5,n-6\}$
contains no triangular number, and otherwise $\epsilon=0$.  As in
Theorem~\ref{min-nontri}, we use the lower bound in Lemma~\ref{cutlemma}, the
lower bound for trees with $n-r-1$ vertices, Lemma~\ref{concave} (with $m=n-2$),
and $u_{n-3}=u_{n-1}-1$ to compute
\begin{equation}
  \label{eq:cuttree}
  \spo(T) \geq \spo(T-R-v) + r+1+u_r = n + u_{n-r-2}+u_{r}
\geq n+\epsilon+u_{n-3}+u_1 = n +\epsilon+ u_{n-1}.
\end{equation}
If $\epsilon=1$, then $\spo(T)>\spo(K_{1,n-1})$.  Since $n-1$ or $n-2$ is
triangular, when $n\ge17$ no triangular number lies in $\{n-3,n-4,n-5,n-6\}$.
The only cases remaining are $n\in\{8,11,12,16\}$, which yield
$\epsilon=0$.  Now equality must hold throughout in (\ref{eq:cuttree}),
which requires $\spo(T-R-v) = (n-r-1) + u_{n-r-2}$.  In other words, $T-R-v$ is
a minimizing tree on $n-r-1$ vertices, and $\spo(T)=n+u_{n-r-2}+u_r$.
Equaling the minimum requires $u_{n-r-2}+u_r=u_{n-1}$.

If $n=8$, then $r \leq \frac{n-2}{2}$ forces $r=3$. (Recall that we
are assuming $r \geq 3$ here.) Since $u_3+u_3=4>3=u_7$, the only
minimizing trees are $\{K_{1,7},S_{1,5},S_{2,4},S'_{1,4}\}$.

If $n=11$, then $r \leq \frac{n-2}{2}$ forces $r\le4$.
Since $u_6+u_3=5>4=u_{10}$, the case $r=3$ is eliminated.
If $r=4$, then $\spo(T)=11+u_5+u_4=15$, and $T-R-v$ is a minimizing tree on six
vertices.  Thus $T-R-v=K_{1,5}$, which yields $T\in\{S_{4,5},S'_{4,4}\}$,
depending on whether $v$ is adjacent to the center or a leaf of $K_{1,5}$.

If $n=12$, then $r \leq \frac{n-2}{2}$ forces $r\le5$.
Note that $u_7+u_3=u_6+u_4=5>4=u_{11}$, so we may assume $r=5$.
Since $r+1$ is triangular, Theorem~\ref{thm:main} eliminates this case via
\[ \spo(T) = \spo(T-R) + 5 + u_5 \geq 7 + u_{6} +5 + u_5 = 17. \]

If $n=16$, then $r \leq \frac{n-2}{2}$ forces $r\le7$.
Since $u_r+u_{14-r}=6>5=u_{15}$ for $r \in \{3,4,6,7\}$, such values of $r$
cannot occur in a minimizing tree.  When $r=5$ we again apply
Theorem~\ref{thm:main} to eliminate this case via
\[\spo(T)=\spo(T-R)+5+u_5 \geq 11+u_{10}+5+u_5 = 22>21=16+u_{15}. \]
This completes the proof.
\end{proof}

Theorems~\ref{min-nontri} and \ref{thm:classify} complete the characterization
of the minimizing trees.

\section{The Interactive Sum Choice Number of Forests}\label{sec:isc-spo}

Recall that in the game introduced by Bonamy and Meeks~\cite{bonamy-meeks}, in
each round Requester specifies a vertex $v$ of the graph $G$ and Supplier adds
a color to the list $L(v)$, with the game ending when $G$ is $L$-colorable. 
The length of the game under optimal play is $\isc(G)$.

We prove $\spo(T) = \isc(T)$ for each forest $T$ by showing that $\isc$
satisfies the same recurrence as $\spo$ on trees, using lemmas like those in
Section~\ref{sec:main}.  That is, we prove the following:

\begin{theorem}\label{thm:iscmain}
Let $T$ be a forest.  If $T$ has no edges, then $\isc(T)=|V(T)|$.  If $v$ is a
stem in $T$ and $R$ is the set of leaf neighbors of $v$, with $r=|R|$, then

\vspace{-1pc}
  \[ \isc(T) =
  \begin{cases}
\isc(T-R-v)+r+1+u_r,& \text{if $r+1$ is not a triangular number,}
\\
 \isc(T-R)+r+u_r, &\text{if $r+1$ is a triangular number.}
  \end{cases} \]
\end{theorem}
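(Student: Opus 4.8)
The plan is to show that $\isc$ obeys exactly the three structural relations that drive the computation of $\spo$ on forests: the cut bounds of Lemma~\ref{cutlemma}, the non-triangular splitting of Lemma~\ref{nontriang}, and the matching triangular bounds of Lemmas~\ref{lem:penupper} and~\ref{penlower}. Together with the star value $\isc(K_{1,r}) = r+1+u_r$ from Theorem~\ref{thm:bonamy-star}, these relations force $\isc$ to satisfy precisely the recurrence in the statement, exactly as they do for $\spo$ in Section~\ref{sec:main}. The base case is immediate: when $T$ has no edges, $T$ is $L$-colorable as soon as every list is nonempty, so Requester uses one request per vertex and Supplier cannot delay, giving $\isc(T)=|V(T)|$. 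The real work is to prove the $\isc$ analogs of the three lemmas.

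First I would establish the $\isc$ cut lemma: for a bipartition $(A,B)$ of $V(G)$,
\[ \isc(G[A]) + \isc(G[B]) \le \isc(G) \le \isc(G[A]) + \isc(G[B]) + \sizeof{[A,B]}. \]
For the lower bound, Supplier answers each request inside $A$ (respectively $B$) by its optimal strategy for $G[A]$ (respectively $G[B]$); since $G$ is $L$-colorable only if both induced subgraphs are, and Supplier keeps each side non-colorable until its own threshold is reached, the game cannot end before $\isc(G[A])+\isc(G[B])$ requests are made. For the upper bound I would have Requester first make $G[A]$ colorable and fix a coloring $\phi_A$, then play the $G[B]$ game while \emph{avoiding}, at each $v\in B$, the colors $\phi_A$ assigns to the $A$-neighbors of $v$. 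The key sub-observation is that forbidding a set of colors at each vertex costs at most its total size in extra requests (Supplier can waste at most one request per forbidden color, since colors supplied to a fixed vertex are distinct); as the forbidden sets have total size $\sizeof{[A,B]}$, this yields a coloring $\phi_B$ compatible with $\phi_A$ across the cut and the stated bound.

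Next I would carry the two inductions of Section~\ref{sec:main} over to $\isc$. For the non-triangular case I would prove $\isc(T)=\isc(T-R-v)+\isc(K_{1,r})$ by induction on $r$, using the $\isc$ cut lemma for the lower bound and, for the upper bound, that one round of optimal play on the star leaves a residual star $K_{1,r'}$ with $r'+1$ again non-triangular (Lemma~\ref{uplus}), so the induction hypothesis applies. For the triangular case I would prove the $\isc$ versions of Lemma~\ref{lem:penupper} and Lemma~\ref{penlower} by induction on $\sizeof{V(T)}$, with star components as the base. This step requires a refined description of optimal play on a star in the $\isc$ game, the analog of Theorem~\ref{star}: which vertex Requester should request and how Supplier's reply determines whether the residual position is a star on $r-p$ leaves or already resolves the center, reproducing the arithmetic of $\min\{p,1+u_{r-p}\}$ underlying the star formula.

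The hard part will be the reversal of turn order between the two games. In slow-coloring the maximizer (Lister) moves first and can commit to an entire marked set, whereas in interactive sum choice the maximizer (Supplier) moves second and only reacts to one vertex request at a time; in particular Supplier has no direct analog of ``marking $\{v\}$ together with $u_r+1$ leaves,'' which is the opening used in the proof of Lemma~\ref{penlower}. Consequently the case analyses organized around who commits first must be genuinely restructured rather than copied. A second, related difficulty is that the natural state of the $\isc$ game is a full list assignment $L$ carrying actual colors, not merely an uncolored subgraph as in Proposition~\ref{pr:recur}; I would need to verify that for the positions arising in these inductions --- stars and the trees obtained after Requester's replies --- only the combinatorial type of $L$ matters, so that the residual positions are bona fide smaller instances to which the induction hypothesis applies. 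Once these structural facts are secured, the numerical identities from Section~\ref{sec:main} (driven by Lemma~\ref{uplus}) transfer verbatim and yield the stated recurrence for $\isc$, hence $\isc(T)=\spo(T)$ for every forest $T$.
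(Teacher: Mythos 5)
Your overall architecture (base case, an $\isc$ cut lemma, then a non-triangular splitting lemma and matching triangular bounds) is the same as the paper's, and your cut lemma argument is essentially the paper's Lemma~\ref{lem:cut}: Supplier plays the two sides separately for the lower bound, and Requester plays $G[A]$ first, then $G[B]$, wasting at most one request per cut edge when Supplier repeats a forbidden color. But the heart of the theorem is exactly the part you defer. You correctly observe that the case analyses of Lemmas~\ref{lem:penupper} and~\ref{penlower} ``must be genuinely restructured rather than copied'' because Supplier moves second and has no analog of marking $\{v\}$ together with $u_r+1$ leaves, and you correctly observe that an $\isc$ position is a list assignment rather than an uncolored subgraph, so there is no clean recursion like Proposition~\ref{pr:recur} --- but you never supply the restructured argument. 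In particular, your plan for the non-triangular upper bound (``one round of optimal play on the star leaves a residual star $K_{1,r'}$ with $r'+1$ non-triangular, so induct'') does not survive the translation: after Requester's first requests, the state carries actual colors, and whether the position is ``star-like'' depends on which colors Supplier chose to repeat. The paper's Lemmas~\ref{lem:nontriang} and~\ref{isctriub} replace this with an explicit three-phase Requester strategy: request an initial color at every vertex of $R\cup\{v\}$, keep requesting at $v$ while the set $S_i$ of leaves whose initial color matches the $i$th color at $v$ satisfies $\C{S_i}>u_r-i+1$, then ``free'' a color at $v$ (Definition~\ref{cplay}) and run the subgame on $T-R-v$ (or on $T-R$, saving one request by reusing the color already supplied at $v$), absorbing a possible conflict at the non-leaf neighbor $w$ with one extra request. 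Nothing equivalent appears in your proposal.

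The triangular lower bound is an even clearer gap. Since Supplier cannot open the game, the paper needs a new tool with no slow-coloring counterpart: Lemma~\ref{lem:excess}, which says Supplier has an optimal strategy on $T-R$ that can pre-commit to the sequence of colors it will supply at $v$ independently of Requester's play, and that whenever Requester makes $d(v)+1+q$ requests at $v$, the total number of requests is at least $\isc(T-R)+q$. With this, Supplier pre-loads the initial colors on $R$ to mirror the planned colors at $v$ in triangular multiplicities ($u_r$ leaves receive $c_1$, and $u_r-i+2$ leaves receive $c_i$ for $2\le i\le u_r+1$, which exactly exhausts $R$ because $r+1$ is triangular), and a short case analysis on the color ultimately used at $v$ yields $\isc(T)\ge\isc(T-R)+r+u_r$. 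Your proposal contains no substitute for this pre-commitment lemma or for the coordinated choice of initial colors, which is precisely how the paper compensates for Supplier's inability to move first. So while you identify the right statements and even the right obstacles, the obstacles are left standing: as written, this is an outline of what must be proved, not a proof.
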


Interactive sum coloring satisfies bounds like Lemma~\ref{lem:cutlemma} for
slow coloring when a graph is broken into subgraphs by a vertex bipartition.
The lemma is a special case of one by Bonamy and Meeks~\cite{bonamy-meeks},
which we rephrase slightly.  We include a proof for completeness.

\begin{lemma}[Bonamy--Meeks~\cite{bonamy-meeks}]\label{lem:cut}
If $G$ is a graph and $(A,B)$ is a partition of $V(G)$, then

\vspace{-1pc}
\[\isc(G[A])+\isc(G[B])\le \isc(G)\le\isc(G[A])+\isc(G[B])+\C{[A,B]}.\]
\end{lemma}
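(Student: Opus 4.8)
The plan is to mirror the proof of Lemma~\ref{lem:cutlemma} for slow-coloring, with Supplier playing the role of Lister for the lower bound and Requester playing the role of Painter for the upper bound. In both directions each player runs optimal strategies for the two induced subgames on $G[A]$ and $G[B]$ in parallel, reading only the lists on the relevant side; the one genuinely new ingredient, needed for the upper bound, is a charging argument that absorbs the cut edges into an additive term of $\sizeof{[A,B]}$.

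For the lower bound I would have Supplier answer every request to a vertex of $A$ by an optimal Supplier strategy for the game on $G[A]$, and every request to a vertex of $B$ by an optimal Supplier strategy for $G[B]$, treating these as two independent games. Since an induced subgraph of an $L$-colorable graph is $L$-colorable, the game on $G$ cannot end until both $G[A]$ and $G[B]$ are $L$-colorable from their own lists. Under the chosen responses, $G[A]$ is not colorable until at least $\isc(G[A])$ requests have landed in $A$, and likewise for $B$; hence Supplier forces at least $\isc(G[A])+\isc(G[B])$ rounds.

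For the upper bound, Requester first requests only vertices of $A$, following an optimal Requester strategy for $G[A]$; after at most $\isc(G[A])$ requests the $A$-lists admit a proper coloring $\phi_A$ of $G[A]$, which I fix. For $v\in B$ let $F(v)=\{\phi_A(u)\st u\in A,\ uv\in E(G)\}$ be the colors forbidden at $v$ across the cut, and note $\sum_{v\in B}\sizeof{F(v)}\le\sizeof{[A,B]}$. Requester now plays a simulated copy of the interactive sum choice game on $G[B]$ whose effective list at $v$ is $L(v)\setminus F(v)$: when Supplier answers a request at $v$ with a color in $F(v)$, Requester treats it as a wasted move and re-requests $v$; when Supplier answers with a color outside $F(v)$, Requester records this as the Supplier move in the simulated game and lets an optimal Requester strategy for $G[B]$ choose the next vertex to request. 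Because Supplier never repeats a color in a single list, at most $\sizeof{F(v)}$ wasted moves occur at each $v$, for a total of at most $\sizeof{[A,B]}$, while the non-wasted moves number at most $\isc(G[B])$. When the simulated game ends, $G[B]$ is colorable from the effective lists by some $\phi_B$ with $\phi_B(v)\notin F(v)$ for all $v$, so $\phi_A\cup\phi_B$ is a proper $L$-coloring of $G$ and the real game has already ended; the total request count is at most $\isc(G[A])+\isc(G[B])+\sizeof{[A,B]}$.

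The main obstacle is precisely the coordination across the cut in the upper bound: an optimal $G[B]$-strategy is oblivious to $\phi_A$, so naively combining $\phi_A$ with a coloring of $G[B]$ can violate cut edges. The device that resolves this is viewing $L(v)\setminus F(v)$ as the effective list and charging each cross-forbidden color Supplier plays as a wasted request; the fact that Supplier cannot repeat colors caps these wasted moves at $\sizeof{[A,B]}$ and keeps the simulated $B$-game a legitimate interactive sum choice instance, so that the bound from $\isc(G[B])$ applies unchanged.
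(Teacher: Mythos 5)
Your proof is correct and follows essentially the same route as the paper: Supplier plays the two induced games independently for the lower bound, and for the upper bound Requester plays $G[A]$ first, fixes $\phi_A$, and then plays $G[B]$ while charging each Supplier response that conflicts across the cut as a wasted re-request, bounded by $\sizeof{[A,B]}$ since Supplier cannot repeat a color in a list. Your ``effective list'' $L(v)\setminus F(v)$ formalism is just a more explicit rendering of the paper's device of putting aside the color $\phi(x)$ when it is supplied at $y$ for a cut edge $xy$.
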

\begin{proof}
As in Lemma~\ref{lem:cutlemma}, the lower bound holds because Supplier can
respond in the games on $G[A]$ and $G[B]$ separately.  For the upper bound,
Requester can play optimal strategies first on $G[A]$ (producing a proper
coloring $\phi$ from the resulting lists) and then on $G[B]$, with an extra
request made at the endpoint $y$ in $B$ of an edge $xy$ in $[A,B]$ whenever
Supplier provides the color $\phi(x)$ at $y$.  Supplier can only provide that
color once for each such edge, and Requester can put aside that response and
continue on $G[B]$ as if it never happened.
\end{proof}

Recall from Theorem~\ref{thm:bonamy-star} that $\isc(K_{1,r})=r+1+u_r$.  Also
$\isc(G)=\C{V(G)}$ when $\C{V(G)}\le1$.  When $T$ is a star, both cases
in Theorem~\ref{thm:iscmain} give the known formula $\isc(T)=r+1+u_r$.  Also
the parameter is additive over components.  Thus, we may assume that $T$ has a
component that is not a star and thus has a stem with a non-leaf neighbor.

Henceforth $v$ is a stem with non-leaf neighbor $w$ in a non-star component of
a forest $T$, the set of leaf neighbors of $v$ is $R$, and $r =\C R\ge1$.  Let
$R'=R\cup\{v\}$ and $T' = T-R'=T-R-\{v\}$.
Lemma~\ref{lem:cut} and Theorem~\ref{thm:bonamy-star} yield
\begin{equation} \label{iscbd}
\isc(T') + r + u_r + 1 \leq \isc(T) \leq \isc(T') + r + u_r + 2.
\end{equation}

For Theorem~\ref{thm:iscmain}, in the case where $r+1$ is not triangular we
need a strategy for Requester that improves the upper bound in \eqref{iscbd}
by $1$.

\begin{definition}\label{cplay}
Given a stem $v$ with a color $c$ in its current list,
{\it freeing $c$ at $v$} means requesting an additional color at each
leaf neighbor of $v$ whose current list is precisely $\{c\}$.
\end{definition}

Note that freeing $c$ at $v$ may make $c$ available for use at $v$ in a proper
coloring chosen from the lists; it also ensures that each leaf neighbor of $v$
with $c$ in its list can be colored.

\begin{lemma}\label{lem:nontriang}
If $r+1$ is not triangular, then $\isc(T) = \isc(T')+r+u_r+1$.
\end{lemma}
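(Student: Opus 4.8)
The lower bound $\isc(T)\ge\isc(T')+r+u_r+1$ is precisely the left inequality in \eqref{iscbd}, so the entire content of the lemma is the matching upper bound, for which I must exhibit a Requester strategy that beats the right inequality in \eqref{iscbd} by one request. The plan is to have Requester play on $T'$ and on the star $T[R\cup\{v\}]\cong K_{1,r}$ in a coordinated fashion, spending $\isc(T')$ requests on $T'$ and only $\isc(K_{1,r})=r+1+u_r$ requests on the star side, so that the single cut edge $vw$ is resolved at no extra cost. The reason one can hope to absorb the cut edge is that when $r+1$ is not triangular the star is not tight: since $t_{u_r}\le r$ and $r+1\ne t_{u_r+1}$, Requester has one unit of slack in the optimal star strategy, and the goal is to spend that slack on guaranteeing that the color eventually used at $v$ differs from the color used at $w$, rather than on a separate request at $v$.

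I would run the argument by induction on $r$, mirroring the proof of Lemma~\ref{nontriang}. In the inductive step Requester plays one optimal round of the star game at $v$ together with the corresponding optimal play on $T'$; the analysis of optimal star play underlying Theorem~\ref{thm:bonamy-star} shows that after this round the uncolored star at $v$ is some $K_{1,r'}$, and Lemma~\ref{uplus} guarantees that $r'+1$ is again not triangular, so the induction hypothesis applies to the smaller forest and the single cut edge is charged against the preserved slack. The operation making the coordination possible is freeing (Definition~\ref{cplay}): whenever Supplier would force at $v$ exactly the color $c$ that the $T'$-play assigns to $w$, or whenever Supplier blocks all current candidate center colors by handing singleton lists $\{c\}$ to leaves in $R$, Requester frees $c$ at $v$, spending requests on precisely the stuck leaves. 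This simultaneously lets those leaves be colored and promotes $c$ to a usable center color, and because $r+1$ is not triangular the total number of such freeing requests stays inside the star budget $r+1+u_r$. The base case $r=1$, in which the star is the single edge $vz$, is handled directly by coordinating the three available requests among $v$, $z$, and $w$ so that $v$ and $w$ receive different colors without an added request, which is possible exactly because $1+1$ is not triangular.

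The main obstacle is verifying that the save is exactly one against a fully adversarial Supplier. The difficulty is that Supplier attacks on two fronts at once: it can color $w$ inside the $T'$ game so as to match the color it simultaneously tries to force at $v$, and it can block candidate center colors at $v$ by assigning singleton lists to the leaves of $R$. I must show that these two modes of interference cannot combine to cost Requester two extra requests, but only one, and that this one extra is already paid for by the slack present precisely when $r+1$ is not triangular. Making this rigorous requires tracking, through each round, both the current list of $v$ and the color forced at $w$, and checking that freeing resolves every conflict while keeping the residual star in the non-triangular regime guaranteed by Lemma~\ref{uplus}. The bookkeeping at the interface between the $T'$ game and the star game, rather than in either game separately, is where the real work lies.
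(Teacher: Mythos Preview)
Your proposal has a genuine gap stemming from conflating the slow-coloring game with the interactive sum choice game. You write that you would ``run the argument by induction on $r$, mirroring the proof of Lemma~\ref{nontriang}'', and that ``after this round the uncolored star at $v$ is some $K_{1,r'}$''. But Lemma~\ref{nontriang} concerns slow-coloring, where each round Lister marks a set and Painter deletes an independent subset, so the star genuinely shrinks round by round and the induction on $r$ has a clear carrier. In the interactive sum choice game there are no rounds of that kind and no vertices are ever removed; the game is a sequence of single requests, each answered by Supplier, and it ends when the accumulated lists admit a proper coloring. There is no intermediate state in which ``the uncolored star at $v$ is $K_{1,r'}$'', so the induction hypothesis on $r'$ has nothing to attach to. Your appeal to Lemma~\ref{uplus} via ``the analysis of optimal star play underlying Theorem~\ref{thm:bonamy-star}'' likewise borrows the conclusion of Theorem~\ref{star} (the slow-coloring star theorem, about how the star shrinks under optimal Lister/Painter play); that is not the analysis behind Theorem~\ref{thm:bonamy-star}, which the paper does not reproduce.

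The paper's proof is not an induction on $r$ at all. It gives a single explicit Requester strategy in three phases: first request one color at every vertex of $R\cup\{v\}$; then (Phase~1) repeatedly request new colors $c_1,c_2,\ldots$ at $v$ as long as the set $S_i$ of leaves whose initial color equals $c_i$ satisfies $\C{S_i}>u_r-i+1$, stopping at some index $i^*$; then (Phase~2) play an optimal game on $T'$, branching on whether $\C{S_{i^*}}$ equals $u_r-i^*+1$ or is strictly smaller, and in the latter case absorbing a possible extra request at $w$; and finally (Phase~3), if needed, request further colors at $v$ and free the last one. Non-triangularity of $r+1$ enters only in the termination/counting argument showing at most $u_r$ extra requests occur at $v$ across Phases~1 and~3; the saving of one request over \eqref{iscbd} comes from a direct case analysis on $\C{S_{i^*}}$, not from any inductive call. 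Your ingredients (freeing, the slack when $r+1$ is not triangular, coordination with the color at $w$) are the right ones, but they have to be assembled into a concrete sequential Requester strategy for the actual game rather than an induction modeled on the other game.
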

\begin{proof}
Due to \eqref{iscbd}, it suffices to prove
$\isc(T)\le \isc(T')+r+u_r+1$.  We provide a strategy for Requester that is
a slight modification of the strategy used on stars in \cite{bonamy-meeks}.

Requester first requests an initial color at each vertex of $R'$.  For each
$x\in R'$, let $\alpha(x)$ be the first color supplied by Supplier at $x$.
Requester's subsequent strategy is in three phases.  Phases 1 and 3 involve
making requests in $R'$; Phase 2 involves playing optimally on $T'$.  During
the game, let $i$ denote the number of requests that have been made so far at
$v$, let $c_i$ be the color supplied in response to the $i$th request at $v$,
and let $S_i=\{z\in R\st\alpha(z)=c_i\}$.  After the initial colors are
supplied at $R'$, we have $i=1$ and $c_1=\alpha(v)$.  

\medskip
{\it Phase 1}.  Phase 1 requests colors at $v$.  As long as
$\C{S_i}>u_r-i+1$, Requester obtains color $c_{i+1}$ at $v$ and increments $i$.
When $\C{S_i}\le u_r-i+1$, no request is made, Phase 2 begins, and we set
$i^*=i$.  There is no request in Phase 1 if $\C{S_1}\le u_r$.

{\it Phase 2}.  Requester plays on $T'$.  There are two cases, depending
on whether $\C{S_i}$ equals $u_r-i+1$ or is smaller.

If $\C{S_{i^*}}\le u_r-{i^*}$, then Requester first frees $c_{i^*}$ at $v$ and
then plays an optimal request sequence on $T'$.  However, if Supplier adds
$c_{i^*}$ to the list at $w$ in response to a request there, then Requester
ignores that move and immediately makes an extra request at $w$.  The copy of
$c_{i^*}$ becomes an extra unused color in $L(w)$.  In the game played
optimally by Requester on $T'$, the list at $w$ is considered not to contain
$c_{i^*}$.  An $L$-coloring $\phi$ of $T'$ then exists without using $c_{i^*}$
at $w$.  With $c_{i^*}$ available at $v$, the proper coloring $\phi$ extends to
$R'$.

If $\C{S_{i^*}}=u_r-{i^*}+1$, then Requester next plays an optimal request
sequence on $T'$.  From the colors supplied, Requester can choose a proper
coloring $\phi$ of $T'$.  Let $c'=\phi(w)$.  If $c'\ne c_{i^*}$, then Requester
frees $c_{i^*}$ at $v$ (by adding colors at the leaf neighbors of $v$ in $R$),
which makes it possible to use $c_{i^*}$ at $v$ and extend $\phi$ to all of $T$.
If $c'=c_{i^*}$, then the process moves to Phase 3.

{\it Phase 3}.  Here, in the situation $\C{S_{i^*}}=u_r-i^*+1$ and $c'=c_{i^*}$,
Requester increments $i$ (to $i^*+1$) and requests $c_{i^*+1}$ at $v$.
As long as $\C{S_i}>u_r-i+1$, Requester increments $i$ and requests a
color at $v$.  At the point when $\C{S_i}\le u_r-i+1$, Requester frees
$c_i$ at $v$.  Since $c'=\phi(w)=c_{i^*}$, the new color at $v$ is different
from the color $c'$ at $w$ under $\phi$, so it can be used at $v$ and the
coloring extends to all of $T$.

\medskip


Any time a request is made in Phase 1 or Phase 3 to pick another color at
$v$ after $c_i$, we have $\C{S_i}\ge u_r-i+2$.  If we reach Phase 3, then
we also have $\C{S_{i^*}}= u_r-i^*+1$.  The sets of the form $S_i$ are pairwise
disjoint subsets of $R$.  There are at most $u_r$ requests in Phases 1 and 3,
since making $u_r+1$ such requests requires
$
r=\C R\ge\SE i1{u_r+1}\C{S_i}\ge \CH{u_r+2}2-1.
$
Since $r+1$ is not triangular, we also have $r\ge\CH{u_r+2}2$,
but by definition $u_r=\max\{k\st \CH{k+1}2\le r\}$.
Hence the strategy terminates, with at most $1+u_r$ requests at $v$.

%
%
  
We have observed that this strategy produces lists from which a proper coloring
can be chosen on $T$, given an optimal strategy on $T'$.  We claim also that it
uses at most $r + u_r + 1 + \isc(T')$ requests altogether.


Suppose first that $\C{S_{i^*}}\le u_r-i^*$.  In this case, we claim that the
total number of requests is at most
  $ r + i^* + \sizeof{S_{i^*}} + (\isc(T') + 1) $.
After the initial $r+1$ requests on $R'$ come $i^*-1$ additional requests at
$v$ in Phase 1.  Freeing $c_{i^*}$ at $v$ in Phase 2 does not make a request at
$v$ but makes $\C{S_{i^*}}$ requests in $R$.  There are then at most
$\isc(T')+1$ requests on $T'$ in Phase 2 (there may be an extra request at
$w$).  Since $\sizeof{S_{i^*}} \leq u_r - i^*$, the total number of requests is
at most $r + u_r + 1 + \isc(T')$, as desired.

If $\C{S_{i^*}}= u_r-i^*+1$, then let $c_k$ be the last color supplied at $v$.
We claim that the total number of requests is at most
$ r + k + \sizeof{S_k} + \isc(T') $.
Initially there are $r+1$ requests on $R'$, and eventually there are $k-1$
additional requests at $v$.  In Phase 2 there are at most $\isc(T')$ requests
on $T'$.  If $c'\ne c_{i^*}$, then $i^* = k$, so freeing $c_{i^*}$ uses
$\C{S_k}$ additional requests on $R$. On the other hand, if $c'=c_{i^*}$, then
later freeing $c_k$ at $v$ incurs $\C{S_k}$ additional requests on $R$; again
the claimed bound holds.  In both cases $\sizeof{S_k} \leq u_r - k +1$,
yielding at most $r + 1 + u_r + \isc(T')$ total requests.
\end{proof}

When $r+1$ is triangular, we want to compute $\isc(T)$ in terms of the subtree
obtained by deleting only $R$, not $R\cup\{v\}$.  That is,
$\isc(T) = \isc(T-R) + r + u_r$.  The upper bound actually does not depend on
$r+1$ being triangular.

\begin{lemma}\label{isctriub}
  $\isc(T) \leq \isc(T-R) + r + u_r$.
\end{lemma}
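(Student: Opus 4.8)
The plan is to reduce quickly to a single hard case and then patch the star strategy of Lemma~\ref{lem:nontriang}. By \eqref{iscbd} we always have $\isc(T)\le\isc(T')+r+u_r+2$, and the cut lemma (Lemma~\ref{lem:cut}, applied to the partition $(V(T'),\{v\})$ of $V(T-R)$) gives $\isc(T-R)\ge\isc(T')+1$; hence if $\isc(T-R)=\isc(T')+2$ the claim is immediate. When $r+1$ is not triangular, Lemma~\ref{lem:nontriang} yields $\isc(T)=\isc(T')+r+u_r+1\le\isc(T-R)+r+u_r$. So I would assume that $r+1$ is triangular and that $\isc(T-R)=\isc(T')+1$, and it remains to give a Requester strategy using at most $\isc(T')+r+u_r+1$ requests.

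For this I would replay the three-phase strategy from Lemma~\ref{lem:nontriang}, which requests an initial color on each vertex of $R'=R\cup\{v\}$, makes further requests at $v$ while freeing colors there, and interleaves an optimal play on $T'$. The termination count in that proof shows that Phases~1 and~3 together make at most $u_r$ requests at $v$ when $r+1$ is not triangular, but for triangular $r+1$ the same count permits $u_r+1$ such requests, overshooting the target by exactly one. That extra request is spent only in Phase~3, and Phase~3 is entered only when the color $\phi(w)$ selected for the neighbor $w$ by the optimal $T'$-play coincides with the color $c_{i^*}$ that Requester is trying to free at $v$; whenever $\phi(w)\ne c_{i^*}$, freeing $c_{i^*}$ succeeds at once and the coloring extends to all of $T$ with no Phase~3.

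The fix, and the main obstacle, is to use the hypothesis $\isc(T-R)=\isc(T')+1$ to guarantee that Requester can arrange $\phi(w)\ne c_{i^*}$. Intuitively, $\isc(T-R)=\isc(T')+1$ says that the pendant vertex $v$ can be colored against $w$ with a single request, which should force an optimal $T'$-play that leaves $w$ with at least two admissible colors; Requester then assigns $w$ whichever of these differs from $c_{i^*}$, skips Phase~3, and finishes with at most $u_r$ requests at $v$, hence at most $\isc(T')+r+u_r+1=\isc(T-R)+r+u_r$ requests overall. I expect the delicate point to be converting the numerical equality $\isc(T-R)=\isc(T')+1$ into this concrete two-color freedom at $w$ in a way compatible with the coordinated star play. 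The clean way is a dichotomy: if no optimal completion of $T'$ ever leaves $w$ with two admissible colors, then Supplier can force two requests at $v$ once $v$ is appended, giving $\isc(T-R)\ge\isc(T')+2$ and contradicting the assumption of this case; otherwise such a completion exists and Requester uses it to avoid Phase~3. Establishing that dichotomy rigorously is where the real work lies.
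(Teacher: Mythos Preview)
Your reductions are sound: the only case left is $r+1$ triangular with $\isc(T-R)=\isc(T')+1$, and there it suffices to exhibit a Requester strategy using at most $\isc(T')+r+u_r+1$ requests. The difficulty is exactly where you locate it, but the dichotomy you propose is false, so the plan cannot be completed as written.

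Take $T'=K_2$ with vertices $w,x$, so $T-R=P_3$ with $v$ pendant at $w$. Here $\isc(T')=3$ and $\isc(T-R)=\isc(K_{1,2})=4=\isc(T')+1$, so we are in your residual case. Yet there is \emph{no} Requester strategy on $T'$ using three requests that always leaves two admissible colors at $w$. Supplier simply supplies a fixed color $c$ as the first color at $w$, and thereafter makes the second color at $w$ coincide with the first color at $x$ (choosing whichever is requested later to match the earlier one, using a fresh color $\ne c$). Against any three-request play this yields $L(w)\subseteq\{c,a\}$ and $L(x)\subseteq\{a\}$ for some $a\ne c$; the unique $L$-coloring then has $\phi(w)=c$. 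So ``$\isc(T-R)=\isc(T')+1$'' does \emph{not} buy the two-color freedom at $w$ you need, and with $c=c_{i^*}$ Supplier can always drive your strategy into Phase~3 while still charging the full $\isc(T')$ on the subgame, overshooting the target by one.

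The paper avoids this trap by changing what happens in Phase~2 rather than trying to skip Phase~3. In the critical subcase $|S_{i^*}|=u_r-i^*+1$, Requester plays an optimal game on $T-R$ (not on $T'$), treating $L(v)$ as empty there and \emph{recycling} the already-supplied color $c_{i^*}$ as the response to the first request that the $T-R$ strategy makes at $v$. This costs $\isc(T-R)-1$ actual new requests on $T-R$, and the coloring $\phi$ of $T-R$ produced can be chosen with $\phi(v)=c_{i^*}$ whenever possible (since $v$ is a leaf). If $\phi(v)=c_{i^*}$, freeing $c_{i^*}$ finishes; otherwise Phase~3 runs as before, but now $c_{i^*}\in L(v)$ already equals $\phi(w)$, so any later $c_k$ is usable at $v$. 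Either way the total is at most $r+k+|S_k|+\isc(T-R)-1\le r+u_r+\isc(T-R)$. The saving comes from reusing $c_{i^*}$, not from controlling $\phi(w)$.
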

\begin{proof}
We give a strategy for Requester.  The strategy is similar to that in
in Lemma~\ref{lem:nontriang}, except that in Phase 2 we play a subgame on $T-R$
instead of $T-R-v$.  Again Requester first requests an initial color
$\alpha(x)$ for each $x \in R'$.  Again let $c_i$ be the $i$th color
supplied at $v$, and let $S_i=\{z \in R \st \alpha(z) = c_i\}$.

\medskip
{\it Phase 1}.  As long as $\C{S_i}>u_r-i+1$, Requester obtains color $c_{i+1}$
at $v$ and increments $i$.  When $\C{S_i}\le u_r-i+1$, no request is made,
Phase 2 begins, and we set $i^*=i$.  

{\it Phase 2}.  There are two cases, depending on whether $\C{S_{i^*}}$ equals
$u_r-{i^*}+1$ or is smaller.

If $\C{S_{i^*}}\le u_r-{i^*}$, then Requester proceeds as in
Lemma~\ref{lem:nontriang}.  Requester frees $c_{i^*}$ at $v$ and then
plays optimally on $T'$.  Again if Supplier adds $c_{i^*}$ to the list at $w$
in response to a request there, then Requester makes an extra request at $w$.
Without using the copy of $c_{i^*}$ in $L(w)$, Requester obtains lists on $T'$
from which a proper coloring can be chosen.  With $c_{i^*}$ available at $v$,
this coloring $\phi$ extends to $R'$.

If $\C{S_{i^*}}=u_r-{i^*}+1$, then Requester next plays an optimal game on
$T-R$, treating $L(v)$ as initially empty in the game on $T-R$.  The first time
Requester's optimal strategy on $T-R$ makes a request at $T-R$, we treat it in
the game on $T-R$ as being supplied by the color $c_{i}^*$ at $v$ that was
supplied earlier.  Hence we save one from $\isc(T-R)$ in counting the
requests.  From the resulting lists, choose a proper coloring $\phi$ of $T-R$,
using $c_{i^*}$ at $v$ if possible.  Since $v$ is a leaf in $T-R$, this is
possible unless $c_{i^*}$ must be used at $w$.  Let $c'=\phi(v)$.  If
$c'= c_{i^*}$, then Requester frees $c_{i^*}$ at $v$ and extends $\phi$ to all
of $T$.  If $c'\ne c_{i^*}$, then the process moves to Phase 3.

{\it Phase 3}.  Here, in the situation $\C{S_{i^*}}=u_r-i^*+1$ and
$c'\ne c_{i^*}$, Requester increments $i$ (to $i^*+1$) and requests $c_{i^*+1}$ at $v$.  As long as $\C{S_i}>u_r-i+1$, Requester increments $i$ and requests a
color at $v$.  At the point when $\C{S_i}\le u_r-i+1$, Requester frees
$c_i$ at $v$.  Since color $c_{i^*}$ already exists in $L(v)$ and has been
used at $\phi(w)$, the color $c_i$ is different from $c_{i^*}$ and can 
be used at $v$.  With $\phi(v)=c_i$, the coloring now extends to all of $T$.

\medskip

The same counting argument as in Lemma~\ref{lem:nontriang} shows that the 
strategy terminates, and we have argued that it produces lists from which
a proper coloring can be chosen.  We claim that it makes at most
$\isc(T-R)+r+u_r$ requests.

If $\C{S_{i^*}}\le u_r-i^*$, then as argued in Lemma~\ref{lem:nontriang} the
total number of requests is at most
  $ r + i^* + \sizeof{S_{i^*}} + (\isc(T') + 1) $.
Since $\C{S_{i^*}}\le u_r-i^*$, the number of requests is at most
$r+u_r+1+\isc(T')$.  Since $\isc(T-R)\ge\isc(T')+1$ by~\eqref{iscbd},
we obtain $\isc(T)\le r+u_r+\isc(T-R)$, as desired.

If $\C{S_{i^*}}= u_r-i^*+1$, then let $c_k$ be the last color supplied at $v$.
Now the total number of requests is at most
  $ r + k + \sizeof{S_k} + (\isc(T-R)-1), $
where as remarked earlier we save one request on $T-R$ because Requester uses
the already-counted first request made at $v$.  Since $\C{S_k}\le u_r-k+1$,
again the desired bound holds.
\end{proof}

Finally, we prove the lower bound in the triangular case.  To simplify
arguments for optimal strategies for Supplier, we prove a lemma with two
statements about a fixed vertex $v$ in the game on any graph $G$.  The first
property is that Supplier has an optimal strategy with the freedom to name the
first color supplied at $v$ independently of what Requester does.  This
property is a special case of Observation~2.1 of Bonamy and
Meeks~\cite{bonamy-meeks}.  We include a proof not only for completeness, but
also to show that this and the second property can be guaranteed simultaneously.

The second property restricts the strategies that need to be considered for
Requester in response, showing that it is non-optimal for Requester to make too
many requests at one vertex.  The goal of Requester on a graph $G$ is to
produce a list assignment $L$ such that $G$ is $L$-colorable.  Just as it is
useless in $f$-choosability to have $f(v)\ge d(v)+1$, where $d(v)$ denotes the
degree of $v$ in $G$, so it is nonoptimal for Requester to request more than
$d(v)+1$ colors at a vertex.  Such requests allow Supplier to increase the
final score.

Note that once a Requester strategy $\cR$ and a Supplier strategy $\cS$ on a
graph $G$ are fixed, the sequence of moves is fully determined.  We refer
to this sequence as the \emph{$(\cR, \cS)$-game}.

\begin{lemma}\label{lem:initialcolor} \label{lem:excess}
For a fixed vertex $v$ in a graph $G$, Supplier has an optimal strategy $\cS$
for the game on $G$ having the following two properties:\\
(1) $\cS$ always provides the same color $c_1$ at $v$ in response to the first
request made at $v$, regardless of what earlier requests have already been
made.\\
(2) For any Requester strategy $\cR$, if there are $d(v)+1+q$ requests at $v$
in the $(\cR,\cS)$-game, where $q\ge0$, then the total number of requests in
the $(\cR,\cS)$-game is at least $\isc(G)+q$.  Furthermore, all colors
supplied at $v$ by $\cS$ after the first $d(v)$ colors can be chosen
arbitrarily, as long as they have not yet been supplied at $v$.
\end{lemma}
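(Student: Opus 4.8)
The plan is to construct a single Supplier strategy $\cS$ that achieves both properties at once, starting from an arbitrary optimal strategy $\cS_0$ and changing only how Supplier names the colors it supplies at $v$. The guiding principle is that colors are interchangeable: since $G$ is $L$-colorable exactly when it is $L'$-colorable for any $L'$ obtained from $L$ by permuting the color set, applying a fixed permutation to every color that $\cS_0$ ever supplies yields another optimal strategy. I would first record this \emph{conjugation principle} by coupling the play of a permuted strategy against a given Requester with the play of $\cS_0$ against the same Requester: Requester only names vertices, so the two plays visit the same vertices, their list assignments differ by the permutation at every step, and they terminate on the same round. This reduces both properties to bookkeeping about the names $\cS$ assigns at $v$.

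For property (1), I would run $\cS_0$ in an internal ``shadow'' game and translate its colors to the real game through a bijection $\pi$ built on the fly, maintaining the invariant that the real list assignment is $\pi$ applied to the shadow one. Before $v$ is first requested I would reserve the name $c_1$, never using it as a real color; then at the first request at $v$ I would map whatever color $\cS_0$ supplies there to $c_1$. By the conjugation principle the resulting $\cS$ is still optimal, and by construction it answers $c_1$ at the first request at $v$ no matter which requests preceded it.

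For property (2), the structural fact I would use is that once $\C{L(v)}\ge d(v)+1$ the vertex $v$ is irrelevant to colorability: $G$ is $L$-colorable if and only if $G-v$ is, since any coloring of $G-v$ extends to $v$ using a color of $L(v)$ avoiding the at most $d(v)$ colors on its neighbors. I would arrange that every color $\cS$ supplies at $v$ after the first $d(v)$ is a fresh private color appearing nowhere else and consulted nowhere in $\cS$'s other responses, so that requests at $v$ beyond the $(d(v)+1)$st are pure waste. Then, given any Requester $\cR$ for which the $(\cR,\cS)$-game makes $d(v)+1+q$ requests at $v$ and $N$ requests in total, I would form the non-adaptive Requester $\cR'$ that replays the exact vertex sequence of that game with the $q$ excess requests at $v$ deleted. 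Because $\cS$ ignores the deleted colors, the $(\cR',\cS)$-game reproduces every retained move and ends in a colorable position (the original final position with $q$ colors removed from $L(v)$, still of size at least $d(v)+1$). Hence $\cR'$ makes at most $N-q$ requests, and optimality of $\cS$ gives $\isc(G)\le N-q$, that is, $N\ge\isc(G)+q$. The ``furthermore'' clause then follows because the colors beyond the first $d(v)$ were never consulted in this argument.

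The main obstacle is the reuse case inside property (1): $\cS_0$'s response to the first request at $v$ may be a color already supplied elsewhere, and then the invariant forces that same already-committed real color at $v$ rather than $c_1$, so a naive relabeling cannot install $c_1$ (and one cannot simply declare that color fresh, since having $v$ share a color with a neighbor can strictly help Supplier, as the case $G=K_2$ shows). Resolving this is where care is needed: one must argue that Supplier may route its repeated color through the reserved name $c_1$, equivalently that among optimal strategies one may choose one whose first color at $v$, when it duplicates an existing color, duplicates one that the global relabeling has already named $c_1$. Doing this while keeping $\cS$ oblivious to $v$'s later colors, so that the deletion argument for property (2) stays valid, is the delicate point; coordinating the fixed-name requirement of (1) with the obliviousness requirement of (2) in a single strategy is the crux of the proof.
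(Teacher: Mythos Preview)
The obstacle you identify in property~(1) is a genuine gap, not merely a delicate point, and your proposal does not resolve it. Take $G=K_2$ with vertices $v,w$: any optimal $\cS_0$ must give $w$ and $v$ the same color, so if Requester asks at $w$ first, your on-the-fly bijection commits that shared color to some name other than the reserved $c_1$; at the first request at $v$ you then cannot output $c_1$ without breaking the bijection, and breaking it destroys the coupling that certifies optimality. No evident argument ``routes the repeated color through $c_1$'' while maintaining a single global permutation, so the color-relabeling approach as stated does not establish~(1).

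The paper sidesteps this by shifting \emph{time} rather than relabeling colors. It fixes an optimal $\cS'$ and sets $c_1$ to be whatever $\cS'$ would supply if the very first request of the entire game were at $v$. The new strategy $\cS$ plays $\cS'$ in an imagined game in which that initial request at $v$ has already occurred and been answered with $c_1$; when the first real request at $v$ arrives, $\cS$ actually supplies $c_1$, after which the real and imagined list assignments coincide. Optimality is inherited because the imagined game is an honest run of $\cS'$ against the Requester ``ask at $v$ first, then copy $\cR$'', with the same total number of requests as the real game. The reuse problem never arises, since $c_1$ is fixed before any other color is named. For property~(2) the paper's device is close to yours: from the $(d(v)+1)$st request at $v$ onward, $\cS$ supplies arbitrary unused colors while the imagined $\cS'$-game simply omits the excess requests, and one compares the two games just as in your deletion argument. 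Your plan for~(2) is therefore sound; what is missing is a working construction for~(1), and the time-shift trick supplies it in a way that also keeps $\cS$ oblivious to the late colors at $v$, exactly as you need for~(2).
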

\begin{proof}
Let $\cS'$ be an optimal Supplier strategy.  We define $\cS$ in terms of
$\cS'$.  First, let $c_1$ be the color that $\cS'$ would supply at $v$ if
Requester made the first request of the game at $v$.  Under $\cS$, Supplier
pretends that this move has been made against $\cS'$ and continues playing
according to $\cS'$ as if $c_1$ has already been supplied at $v$, until the
first actual request is made at $v$.  Strategy $\cS$ then actually supplies
$c_1$ at $v$, but pretends that no request was actually made then, since in
the imagined game played by $\cS'$ that request came earlier.  This point must
be reached, because the actual game cannot end without a request being made
at $v$.

Strategy $\cS$ then continues playing the game according to $\cS'$ (as long as
at most $d(v)$ requests have been made at $v$).  This can be done since the
lists $L(w)$ under the actual game played by $\cS$ and the imagined game played
by $\cS'$ are now the same for all $w\in V(G)$.

Strategy $\cS$ may also deviate from $\cS'$ when more than $d(v)$ requests are
made at $v$.  When Requester brings the list size at $v$ to $d(v)+1$, let 
$c'$ be the color that $\cS'$ would supply.  Strategy $\cS$ ignores this
and instead supplies any color $\hat c$ not already used at $v$, playing the
subsequent game as if $c'$ had actually been supplied.  In response to requests
beyond $d(v)+1$, $\cS$ supplies any colors not previously supplied at $v$
(``free colors''), and for other moves $\cS$ plays $\cS'$ as if the requests
beyond $d(v)+1$ requests at $v$ have not occurred.

The game continues until a proper coloring $\phi$ can be chosen from the lists.
When this occurs, colors supplied at $v$ after the first $d(v)+1$ requests can
be ignored, since $\phi$ can be chosen without using them (because at most
$d(v)$ colors occur on neighbors of $v$).  Also, if $\phi$ requires using
$\hat c$ on $v$, then the first $d(v)$ colors supplied by $\cS'$ must be used
by $\phi$ on the neighbors of $v$.  This means that in the imagined game played
according to $\cS'$, the color $c'$ at $v$ completes a proper coloring $\phi'$.
Since $\cS'$ is optimal, at least $\isc(G)$ requests must have been made in
addition to the $k-d(v)-1$ made beyond the first $d(v)+1$ requests at $v$.
\end{proof}

\begin{lemma}\label{isctrilb}
  If $r+1$ is triangular, then $\isc(T) \geq \isc(T-R) + r + u_r$.
\end{lemma}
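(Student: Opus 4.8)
The plan is to give Supplier a strategy on $T$ that combines an optimal strategy on the subtree $T-R$ (in which $v$ is a leaf, its only neighbor outside $R$ being $w$) with a carefully chosen ``staircase'' of colors at the leaves of $R$, and then to account for the two kinds of requests separately. For the $T-R$ part, Supplier plays the strategy $\cS$ from Lemma~\ref{lem:excess} with distinguished vertex $v$; since $d_{T-R}(v)=1$, this fixes the first color $c_1$ supplied at $v$ and lets Supplier choose each later color at $v$ freely. By optimality of $\cS$, any game that ends with $T-R$ colorable uses at least $\isc(T-R)$ requests inside $T-R$; moreover, by part~(2) of Lemma~\ref{lem:excess}, if Requester makes $2+q$ requests at $v$ with $q\ge0$, then at least $\isc(T-R)+q$ requests are made in $T-R$.

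For the leaves, Supplier pre-commits to fresh colors $c_1,c_2,\dots$ (with $c_1$ the color fixed above) and to batch sizes $s_1=s_2=u_r$ and $s_i=u_r-(i-2)$ for $3\le i\le u_r+1$, with $s_i=0$ for $i>u_r+1$. Supplier declares that the $i$th color supplied at $v$ will be $c_i$, and that the $k$th leaf of $R$ to receive a first request is placed into the batches in order (so a batch-$i$ leaf gets first color $c_i$); any later request at a leaf receives a fresh color. Because $r+1$ is triangular, $r+1=t_{u_r+1}$, and a short computation gives $\sum_i s_i=\tfrac{u_r(u_r+3)}2=t_{u_r+1}-1=r$, so the batches partition $R$ exactly. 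Since every leaf must be requested at least once and $L(v)\subseteq\{c_1,c_2,\dots\}$, each batch is completely filled and any proper coloring $\phi$ satisfies $\phi(v)=c_{i^*}$ for some $i^*$.

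The accounting then runs as follows. Fix the coloring $\phi$ present when the game ends and let $\phi(v)=c_{i^*}$. Having $c_{i^*}\in L(v)$ forces at least $i^*$ requests at $v$, contributing a bonus of at least $\max(0,i^*-2)$ to the $T-R$ count by Lemma~\ref{lem:excess}. Every leaf in batch $i^*$ has first color $c_{i^*}=\phi(v)$, so each must receive a second (necessarily different) color to be colorable, forcing at least $s_{i^*}$ requests in $R$ beyond the baseline of $r$. Hence the total number of requests is at least $\isc(T-R)+\max(0,i^*-2)+r+s_{i^*}$, and it remains to check that $\max(0,i-2)+s_i\ge u_r$ for every $i$: for $i\le2$ this reads $s_i\ge u_r$, which holds since $s_1=s_2=u_r$; for $3\le i\le u_r+1$ it reads $(i-2)+\bigl(u_r-(i-2)\bigr)=u_r$; and for $i>u_r+1$ it reads $i-2\ge u_r$. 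Combining yields $\isc(T)\ge\isc(T-R)+r+u_r$.

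I expect the main obstacle to be setting up the combined strategy so that the two accountings add without double counting while the color choices stay consistent: the requests in $R$ and in $T-R$ are disjoint, so their lower bounds add, but one must verify that Supplier can simultaneously honor the pre-committed staircase colors at the leaves and the (partly forced, partly free) colors that $\cS$ dictates at $v$, and that the case $i^*=1$, where Lemma~\ref{lem:excess}(2) supplies no bonus, is still covered by the bare optimality bound $\isc(T-R)$ together with $s_1=u_r$. Once the batch-size identity $\tfrac{u_r(u_r+3)}2=r$ is pinned to the hypothesis that $r+1$ is triangular, the minimization $\min_i\bigl[\max(0,i-2)+s_i\bigr]=u_r$ is immediate, and the two halves of the count assemble into the desired bound.
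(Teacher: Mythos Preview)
Your proposal is correct and follows essentially the same approach as the paper: combine an optimal Supplier strategy $\cS$ on $T-R$ (from Lemma~\ref{lem:excess}, with distinguished leaf $v$) with a staircase of initial colors $c_1,\dots,c_{u_r+1}$ at $R$ of sizes $u_r,u_r,u_r-1,\dots,1$, then case on which $c_{i^*}$ is used at $v$ and add the excess-request bonus on $T-R$ to the extra requests forced in batch~$i^*$. The only cosmetic difference is that the paper pre-assigns each leaf of $R$ to a batch, whereas you fill batches dynamically in order of first request; since every leaf must be requested at least once, this makes no difference to the accounting.
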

\begin{proof}
The degree of $v$ in $T-R$ is $1$.  Lemma~\ref{lem:excess} allows us to fix an
Supplier strategy $\cS$ on $T-R$ such that, if Requester makes $2+q$ requests
at $v$, where $q\ge0$, then the total number of requests made on $T-R$ is at
least $\isc(T-R)+q$.  Furthermore, Lemma~\ref{lem:excess} guarantees that $\cS$
can fix the sequence of colors to be supplied at $v$ (in the game on $T-R$)
independently of what Requester does, so that the same colors are always
supplied at $v$ in the same order.  Let $\VEC c1{u_r+1}$ be the first $u_r+1$
colors planned by Supplier to be supplied to $v$, independently of the
Requester strategy $\cR$.

We extend $\cS$ to a strategy on all of $T$ by specifying an initial color
$\alpha(x)$ for each $x\in R$, to be supplied in response to the first request
at $x$, and thereafter supplying arbitrary colors at $x$ in response to
further requests.  To specify $\alpha$, note that
$r=-1+\SE i1{u_r+1}(u_r+2-i)$, since $r+1$ is triangular and
$u_r=\max\{k\st \CH{k+1}2\le r\}$.  Supplier gives initial color $c_1$ to
$u_r$ vertices in $R$.  For $2\le i\le u_r+1$, Supplier gives initial color
$c_i$ to $u_r-i+2$ vertices in $R$.


We claim that $\cS$ forces Requester to make at least $\isc(T-R)+r+u_r$
requests.  Let $\cR$ be an optimal strategy for Requester, and let $c$ be the
color given to $v$ in a proper coloring chosen from the lists when the game
ends.
  
If $c\notin\{\VEC c1{u_r+1}\}$, then Requester has made at least $u_r+2$
requests at $v$.  Hence Requester has made at least $\isc(T-R)+u_r$ requests
on $T-R$.  Together with the $r$ initial requests at $R$, at least
$r+u_r+\isc(T-R)$ requests have been made, as desired.

If $c = c_i$ for some $i$, then Requester has made at least $i$ requests at $v$
plus an extra request at each $x\in R$ such that $\alpha(x)=c_i$.
If $i=1$, then explicitly this counts at least $r+u_r+\isc(T-R)$ requests.
If $i\ge2$, then Requester has made at least $\isc(T-R)+i-2$ requests on 
$T-R$ and $r+u_r-i+2$ requests on $R$, again at least $r+u_r+\isc(T-R)$.
\end{proof}

Lemmas~\ref{lem:nontriang}, \ref{isctriub}, and \ref{isctrilb} complete the
proof of Theorem~\ref{thm:iscmain}.  We note that the arguments can be 
modified to handle also the case of stars.

\providecommand{\bysame}{\leavevmode\hbox to3em{\hrulefill}\thinspace}
\providecommand{\MR}{\relax\ifhmode\unskip\space\fi MR }
\providecommand{\MRhref}[2]{%
  \href{http://www.ams.org/mathscinet-getitem?mr=#1}{#2}
}
\providecommand{\href}[2]{#2}


\begin{thebibliography}{10}
{
\frenchspacing

\bibitem{BBBD}
A. Berliner, U. Bostelmann, R.~A. Brualdi, and L. Deaett,
  \emph{Sum list coloring graphs}, Graphs Combin. \textbf{22} (2006), no.~2,
  173--183. \MR{2231989 (2007c:05070)}

\bibitem{bonamy-meeks}
  M. Bonamy and K. Meeks, \emph{The interactive sum choice number of
  graphs}, 2017, \arxiv{1703.05380}.




\bibitem{CMPW}
J.~M. Carraher, T. Mahoney, G.~J. Puleo, and D.~B. West,
  \emph{Sum-paintability of generalized theta-graphs},
Graphs and Combinatorics \textbf{31} (2015), 1325--1334.


\bibitem{ERT}
P. Erd{\H{o}}s, A.~L. Rubin, and H. Taylor, \emph{Choosability in
  graphs}, Proceedings of the {W}est {C}oast {C}onference on {C}ombinatorics,
  {G}raph {T}heory and {C}omputing ({H}umboldt {S}tate {U}niv., {A}rcata,
  {C}alif., 1979) (Winnipeg, Man.), Congress. Numer., XXVI, Utilitas Math.,
  1980, pp.~125--157. \MR{593902 (82f:05038)}



\bibitem{GKWZZ}
G. Gutowski, T. Krawczyk, D.B. West, M. Zajac, and X. Zhu,
\emph{Upper bounds for sum-color cost of outerplanar, planar and $k$-degenerate
graphs}, preprint.

\bibitem{H1}
B. Heinold, \emph{Sum choice numbers of some graphs}, Discrete Math.
  \textbf{309} (2009), no.~8, 2166--2173. \MR{2510342 (2010m:05278)}

\bibitem{H2}
B. Heinold, \emph{The sum choice number of {$P_3\,\square\,P_n$}},
Discrete Appl.  Math. \textbf{160} (2012), no.~7-8, 1126--1136. \MR{2901131}

\bibitem{I1}
G. Isaak, \emph{Sum list coloring {$2\times n$} arrays}, Electron. J.
  Combin. \textbf{9} (2002), no.~1, Note 8, 7. \MR{1928792 (2003j:05046)}

\bibitem{I2}
G. Isaak, \emph{Sum list coloring block graphs}, Graphs Combin. \textbf{20}
  (2004), no.~4, 499--506. \MR{2108395 (2005h:05077)}





\bibitem{MPW}
T. Mahoney, G. Puleo, and D.B. West, Online sum-paintability:
The slow-coloring game on graphs, 2015, \arxiv{1507.06513}.

\bibitem{MTW}
T. Mahoney, C. Tomlinson, and J. Wise, 
Families of online sum-choice-greedy graphs,
Graphs and Combinatorics \textbf{31} (2015), 2309--2317.



\bibitem{S1}
U. Schauz, \emph{Mr. {P}aint and {M}rs. {C}orrect}, Electron. J. Combin.
  \textbf{16} (2009), no.~1, Research Paper 77, 18. \MR{2515754 (2010i:91064)}


\bibitem{V2}
V.~G. Vizing, \emph{Coloring the vertices of a graph in prescribed colors},
  Diskret. Analiz (1976), no.~29 Metody Diskret. Anal. v Teorii Kodov i Shem,
  3--10, 101. \MR{0498216 (58 \#16371)}

\bibitem{Z1}
X. Zhu, \emph{On-line list colouring of graphs}, Electron. J. Combin.
  \textbf{16} (2009), no.~1, Research Paper 127, 16. \MR{2558264 (2011a:05123)}

}
\end{thebibliography}
\end{document}